\newcommand{\revU}[1]{{#1}}
\newcommand{\revT}[1]{{#1}}
\newcommand{\revR}[1]{{#1}}
\newcommand{\revB}[1]{{#1}}
\newcommand{\pr}[1]{{\mathbb P}\left\{{\displaystyle #1}\right\}}
\newcommand{\expect}[1]{{\mathbb E}\left\{{\displaystyle#1}\right\}}
\newtheorem{theorem}{Theorem}
\numberwithin{theorem}{section}
\newtheorem{lemma}[theorem]{Lemma}
\newtheorem{definition}[theorem]{Definition}
\newtheorem{proposition}[theorem]{Proposition}
\newtheorem{corollary}[theorem]{Corollary}
\newcommand{\NatZero}{\mathbb{N}_0}
\newcommand{\vece}{\boldsymbol e}
\newcommand{\vecf}{\boldsymbol f}
\newcommand{\veca}{\boldsymbol a}
\newcommand{\vecg}{\boldsymbol g}
\newcommand{\vecx}{\boldsymbol x}
\newcommand{\vecd}{\boldsymbol d}
\newcommand{\vech}{\boldsymbol h}
\newcommand{\vecy}{\boldsymbol y}
\newcommand{\vecQ}{\boldsymbol Q}
\newcommand{\vecX}{\boldsymbol X}
\newcommand{\vecY}{\boldsymbol Y}
\newcommand{\vecI}{\boldsymbol I}
\newcommand{\vecA}{\boldsymbol A}
\newcommand{\vecD}{\boldsymbol D}
\newcommand{\vecm}{\boldsymbol m}
\newcommand{\vecq}{\boldsymbol q}
\newcommand{\tvecX}{\tilde{\vecX}}
\newcommand{\vecXI}{{\boldsymbol \Xi}}
\newcommand{\tX}{\tilde{X}}
\newcommand{\vecgamma}{\boldsymbol \gamma}
\newcommand{\vecalpha}{\boldsymbol \alpha}
\newcommand{\vecrho}{\boldsymbol \rho}
\newcommand{\veceta}{\boldsymbol \eta}
\newcommand{\vectheta}{\boldsymbol \theta}
\newcommand{\vecxi}{\boldsymbol \xi}
\newcommand{\veczero}{\boldsymbol 0}
\newcommand{\CC}{\mathcal{C}}
\newcommand{\cCC}{c \in \CC}
\newcommand{\prodc}{\prod_{\cCC}}
\newcommand{\sumc}{\sum_{\cCC}}
\newcommand{\Xck}{X_{c,n}}
\newcommand{\Ack}[1]{A_{ #1}}
\newcommand{\Dck}[1]{D_{#1}}
\newcommand{\Yc}{Y_c}
\newcommand{\Vc}{V_c}
\newcommand{\ZO}{{\mathbb N}_0}
\newcommand{\NN}{{\mathbb N}}
\newcommand{\LipIncr}{L_{\uparrow}}
\newcommand{\dtalp}{\dot{\alpha}}
\newcommand{\balp}{\boldsymbol{\alpha}}
\newcommand{\lb}{\left(}
\newcommand{\rb}{\right)}
\newcommand{\lc}{\left\{}
\newcommand{\rc}{\right\}}
\newcommand{\ls}{\left[}
\newcommand{\rs}{\right]}
\newcommand{\norm}[1]{\vert\vert #1 \vert\vert}
\newcommand{\abs}[1]{ \lvert #1 \rvert}
\newcommand{\Indic}[1]{{\mathbbm 1}\lc #1 \rc}
\newcommand{\Ominc}{\Omega_{-c}}
\newcommand{\Opluc}{\Omega_{+c}}
\newcommand{\pNa}{{\mathcal{N}}_{\text{A},c,n}}
\newcommand{\pNb}{{\mathcal{N}}_{\text{B},c,n}}
\newcommand{\pNt}{{\mathcal{N}}_{\text{T},c}}
\newcommand{\dtalpo}{\dot{\alpha}_\omega}
\newcommand{\tNa}{\tilde{\mathcal{N}}_{\text{A},c,n}}
\newcommand{\dPoiss}{d{\mathcal N}}
\newcommand{\dtPoiss}{d\tilde{\mathcal N}}
\newcommand{\dtNb}{\dtPoiss_{B,c,n}}
\newcommand{\dtNT}{\dtPoiss_{T_c}}
\title{Mean-field limits for large-scale random-access networks}
\author[1,4]{Fabio Cecchi \thanks{f.cecchi1@gmail.com}}
\author[1,2]{Sem C. Borst}
\author[1]{Johan S.H. van Leeuwaarden}
\author[3]{Philip A. Whiting}
\affil[1]{Eindhoven University of Technology, Eindhoven, 5600 MB, Netherlands}
\affil[2]{Nokia Bell Labs, Murray Hill, NJ 07974, US}
\affil[3]{Macquarie University, North Ryde, NSW 2109, Australia}
\affil[4]{Bosch Center for AI, Sunnyvale, CA 94085, US}
\begin{document}

\maketitle

\begin{abstract}
We establish mean-field limits for large-scale random-access 
networks with buffer dynamics and arbitrary interference graphs.
While saturated-buffer scenarios have been widely investigated
and yield useful throughput estimates for persistent sessions,
they fail to capture the fluctuations in buffer contents over time,
and provide no insight in the delay performance of flows with
intermittent packet arrivals.
Motivated by that issue, we explore in the present paper random-access
networks with buffer dynamics, where flows with empty buffers refrain
from competition for the medium.
The occurrence of empty buffers thus results in a complex dynamic
interaction between activity states and buffer contents,
which severely complicates the performance analysis.
Hence we focus on a many-sources regime where the total number
of nodes grows large, which not only offers mathematical tractability
but is also highly relevant with the densification of wireless
networks as the Internet of Things emerges.
We exploit time scale separation properties to prove that the
properly scaled buffer occupancy process converges to the solution
of a deterministic initial-value problem, and establish the
existence and uniqueness of the associated fixed point.
This approach simplifies the performance analysis of networks with
huge numbers of nodes to a low-dimensional fixed-point calculation.
For the case of a complete interference graph, we demonstrate
asymptotic stability, provide a simple closed-form expression
for the fixed point, and prove interchange of the mean-field
and steady-state limits.
This yields asymptotically exact approximations for key performance
metrics, in particular the stationary buffer content and packet
delay distributions.
\end{abstract}

\footnotemark{\footnotesize{This work was done while F.Cecchi was at Eindhoven University of Technology.}}

\section{Introduction}
\label{Sec_Intro}

\subsection{Background and related work}
\label{Sec_Intro_BRW}

Wireless networks are already large and complex today, and being
at the heart of the so-called Internet of Things (IoT)~\cite{AIM10},
are expected to grow even denser in the future~\cite{Evans11}. 
Obviously, when the number of nodes is large, in the  hundreds or even
thousands of nodes, a dedicated medium or channel cannot be assigned
to each node, and nodes have to share the medium. 
Medium access control (MAC) mechanisms are \revR{therefore crucial
to resolve the contention among the various nodes}.
However, in large networks, a centralized control mechanism is hard
to implement and to maintain since it would require constant status
updates generating prohibitive communication overhead.
For this reason the design of efficient distributed (local) MAC
protocols has attracted a lot of attention.

A very popular distributed MAC mechanism is the CSMA (Carrier-Sense
Multiple-Access) protocol, which is currently at the core of the IEEE
802.11 and 802.15.4 standards.
Its popularity is mostly due to its simplicity and efficiency.
The key feature of the CSMA protocol is that each node waits 
for a random back-off period before initiating a transmission.
Interference is avoided since the back-off countdown is interrupted
whenever potential interference is sensed, and only resumed once the
medium is sensed idle again.
This protocol, whilst extremely easy to understand on a local level,
generates complex and interesting macroscopic network dynamics.

In the performance analysis of CSMA networks, a common assumption is
the existence of an underlying graph that represents interference
between the various nodes in the network. 
An edge between two nodes means that destructive interference is
caused by simultaneous transmission.
Both empirical and theoretical support for the notion of 
an interference graph is provided in \cite{HT15,ZZWYZ13}.

When the nodes always have packets to transmit, the network is said
to be {\it saturated} and the macroscopic activity behavior is amenable
to analysis under the assumption of an interference graph.
In particular, the activity process has an elegant product-form
stationary distribution \cite{BKMS87,LKLW10,VBLP10}.
The computation of the stationary distribution of the activity process
reduces to the identification of all the subsets of nodes which may
transmit simultaneously, namely the independent sets of the
interference graph.

Real-life scenarios however involve {\it unsaturated} networks.
Packets arrive at the various nodes according to exogenous processes,
and buffers may drain from time to time as packets are transmitted.
In particular, in IoT applications, sources are likely to generate
packets only sporadically, with fairly tight delay constraints,
and often have empty buffers.
Since empty nodes temporarily refrain from the medium competition, the
activity process is strictly intertwined with the buffer content process.
In this situation, the product-form solution no longer holds
\cite{CBL14,VBLP10} and an exact stationary analysis does not seem
tractable. 

The analysis of unsaturated CSMA networks simplifies if certain
symmetry conditions amongst the various nodes hold.
An important instance is when there is a substantial number of nodes
with similar traffic and placement in the network, so that the
operation of one is equivalent to that of many others.
More generally, nodes can be divided into classes with the symmetry
conditions now applying to nodes of the same class.
The asymptotic regime where the number of such nodes in each class
grows to infinity, is commonly referred to as a {\it mean-field regime}.
Mean-field theory originated in physics, where it is still widely used
in analyzing models involving a large number of interacting particles. 
The aggregated effect of all the other nodes on any tagged node is
approximated by a single averaged effect (the mean-field),
thus reducing a many-body problem to a more tractable one-body problem. 
In the context of random-access networks, a mean-field regime not only
provides analytical tractability, but is also highly relevant in the
context of the envisioned massive numbers of IoT devices. 

A thorough survey of mean-field analysis of random-access protocols 
is presented in~\cite{Duffy10}.
The work of Bianchi~\cite{Bianchi00} is a landmark paper which assumed
\revR{nodes to behave independently one from the other in the regime
where many of them are present} so as to derive tractable formulae
for the key performance measures of the system.
The papers surveyed in~\cite{Duffy10} mostly use mean-field theory
to provide either evidence or objection for the assumption of Bianchi.
Among these papers, it is worth mentioning~\cite{CLJ12}, where the
authors investigated the existence of a global attractor for the
mean-field system and provided sufficient conditions for its existence, 
deducing the validity of Bianchi's assumption.
Further papers which deserve to be mentioned are \cite{BMP08,SGK09},
where the authors exploited mean-field theory so as to obtain
approximations for key performance measures of large systems. 
In particular, \cite{BMP08} focuses on the characterization of the
stability region, while \cite{SGK09} examines the throughput
performance of the system.
None of the above-mentioned papers considered scenarios
with unsaturated buffers, with the exception of~\cite{BMP08},
which however dealt with systems evolving in discrete time and did not
consider performance metrics like packet delays. 

\subsection{Key contributions and paper organization}
\label{Sec_Intro_Key} 
 
In the present paper we examine the buffer dynamics in large-scale
unsaturated random-access networks.
Specifically, we analyze the buffer occupancy processes in a mean-field
regime where the number of nodes grows large.

\revU{We provide a detailed model description and introduce some
useful notation and preliminaries in Section~\ref{Sec_Model}.
An overview of the main results of the paper is presented
in Section~\ref{Sec_Ove}.
In Section~\ref{Sec_Gen} we prove for general interference graphs
that a suitably scaled version of the buffer occupancy processes converges
in the mean-field limit to a tractable deterministic initial-value problem.
We also establish necessary and sufficient conditions for the existence
and uniqueness of a fixed point of the initial-value problem,
and provide a characterization  of the fixed point as the solution
of a low-dimensional equation.
In Section~\ref{Sec_Compl} we focus on scenarios with a complete
interference graph, and demonstrate global asymptotic stability of the
initial-value problem, i.e., convergence to the unique fixed point
from any initial state with finite mass.
We then proceed to show positive recurrence of the pre-limit process
and tightness of the sequence of stationary distributions,
and combine these properties to prove interchange of the mean-field
and steady-state limits.
The interchange of limits is leveraged to establish that the
stationary buffer content distributions at the various nodes converge
to geometric distributions, while the stationary distributions of the
scaled waiting time and sojourn time converge to exponential distributions.
The parameters of these limiting distributions are directly expressed
in terms of the fixed point of the initial-value problem.
These results provide asymptotically exact approximations for the
stationary waiting-time and sojourn time distributions.
In Section~\ref{Sec_Num} we present some simulation experiments
to illustrate the analytical results. 
Finally, in Section~\ref{Sec_Conclusion} we make a few concluding
remarks and offer several suggestions for further research.}
 
\section{Model description}
\label{Sec_Model}

We consider a network of $N$~nodes sharing a wireless medium according
to a random-access protocol.
The various nodes are grouped into a set of classes/clusters
$\mathcal{C} = \{1,\dots,C\}$ such that nodes in the same class have
the same statistical characteristics.
Denote by $N^{(N)}_c$ the number of class-$c$ nodes,
where $\sumc N^{(N)}_c = N$ and $ p_{c,N} = N^{(N)}_c/N$. \\

\noindent
{\it Interference graph}.
Given a class-wise interference graph $G = (\CC, \mathcal{E})$,
two nodes interfere when they belong either to the same class
or to two neighboring classes in~$G$.
A feasible class activity state can thus be represented by a vector
$\omega \in \{0, 1\}^C$, with $\omega_c = 1$ if a class-$c$ node
is transmitting in state~$\omega$ and $\omega_c = 0$ otherwise,
and $\omega_c\omega_d =0$ if $(c, d) \in \mathcal{E}$.
Let $\Omega$ be the set of all feasible class activity states, which are
in one-to-one correspondence with the independent sets of the graph~$G$.
For every class~$c$, we define the following subsets of~$\Omega$:
\begin{align*}
&\Omega_{-c} \:\dot=\: \{\omega \in \Omega: \omega_c = 0, \omega_d = 0
\:\forall\: d \text{ s.t. } (c,d) \in \mathcal{E}\}, \\
&\Omega_{+c} \:\dot=\: \{\omega \in \Omega: \omega_c = 1\}.
\end{align*}
This means that $\omega \in \Omega_{-c}$ if and only if in the class
activity state~$\omega$ none of the nodes belonging to class~$c$
or to a class interfering with class~$c$ are active,
while $\omega \in \Omega_{+c}$ if and only if a class-$c$ node is active.
We define the class-capacity region of the network as
\revT{$\Gamma \doteq \text{Conv}(\Omega)$, i.e.,}
\[
\Gamma = \{\vecgamma \in \mathbb{R}_+^C: \exists\:
\veca=(a_\omega)_{\omega \in \Omega} \:\text{ s.t. }\:
a_\omega \geq 0, \sum_{\omega \in \Omega} a_\omega \leq 1,
\sum_{\omega \in \Omega} a_\omega \omega = \vecgamma\}.
\]

As an illustration, Figure~\ref{fig_SquareGraph} shows a square
interference graph~$G$ with $C = 4$ classes of nodes \revU{numbered
in a clockwise fashion}.
The activity states are
\begin{align*}
\Omega = \big\{& (0,0,0,0), (1,0,0,0), (0,1,0,0), (0,0,1,0), \\
& (0,0,0,1), (1,0,1,0), (0,1,0,1) \big\},
\end{align*}
and the class-capacity region is given by 
\[
\Gamma = \{\vecgamma \in \mathbb{R}_+^4:
\max\{\gamma_1,\gamma_3\} + \max\{\gamma_2,\gamma_4\} \leq 1\}.
\]
Taking $c=1$ the sets $\Ominc,\Opluc \subset \Omega$ are
\begin{eqnarray*}
\Omega_{-1} & = & \lc (0,0,0,0), (0,0,1,0) \rc \\
\Omega_{+1} & = & \lc (1,0,0,0), (1,0,1,0) \rc.
\end{eqnarray*}
\begin{figure}
\begin{center}
\includegraphics[width=0.7\textwidth]{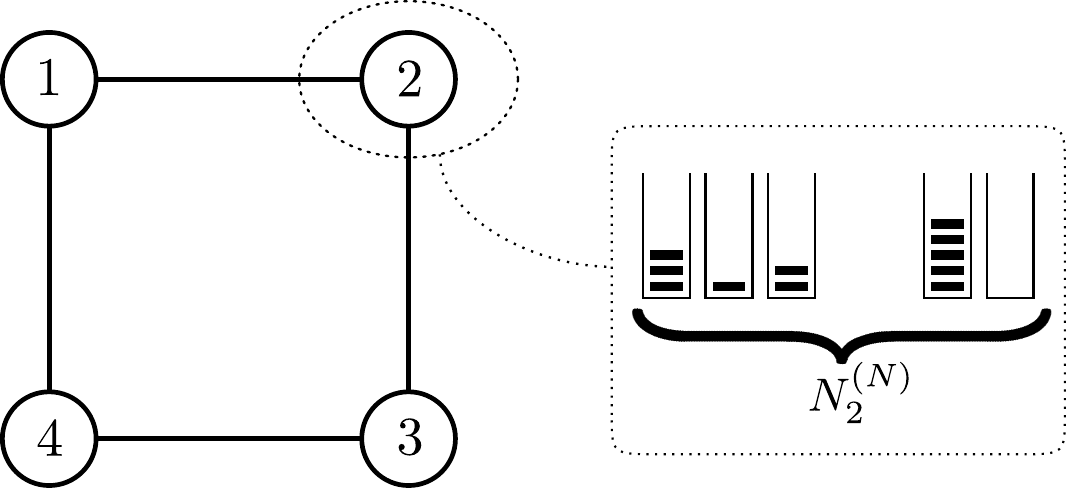}
\caption{A square interference graph with $C = 4$.
\label{fig_SquareGraph}}
\end{center}
\end{figure}

\noindent
{\it Network dynamics}.
\revT{Nodes within the same class share the same statistical features.}
Packets arrive at the various class-$c$ nodes as independent Poisson
processes of rate $\lambda_c^{(N)} \:\dot=\: \lambda_c / N^{(N)}_c$.
Once a class-$c$ node obtains access to the medium, it transmits one
packet, which takes an exponentially distributed time with parameter
$\mu_c^{(N)} \:\dot=\: \mu_c$.
In between two consecutive transmissions a class-$c$ node must
back-off for an exponentially distributed time period with parameter
$\nu_c^{(N)} \:\dot=\: \nu_c / N^{(N)}_c$.
Note that a node refrains from the back-off competition whenever its
buffer is empty.
Also, the back-off period of a class-$c$ node is suspended (frozen)
when the medium is occupied by an interfering node, i.e., the class
activity state does not belong to $\Omega_{-c}$. 

Define the queue length process
$\vecQ^{(N)}(t) = (Q_{c,k}^{(N)}(t))_{\cCC, k = 1,\ldots,N_c}$,
where $Q_{c,k}^{(N)}(t)$ represents the number of packets in the
buffer of the $k$-th node belonging to class~$c$ at time~$t$ 
excluding any possible packet in transmission, and define
by $\vecY^{(N)}(t)$ the class activity process on the state space $\Omega$.
Observe that $\big(\vecQ^{(N)}(t), \vecY^{(N)}(t)\big)$ evolves
as a Markov process. \\

\noindent
{\it A population process description}.
To ease the notation we write $N_c = N^{(N)}_c$ whenever possible.
Due to the class symmetry, the nodes within a class are statistically
indistinguishable, and the system state may be described in terms
of the numbers of nodes belonging to the same class and with the same
number of packets in the buffer.
In particular, define the population process
$\tvecX^{(N)}(t) = (\tX_{c,n}^{(N)}(t))_{\cCC,n\in\NatZero}$, where
\[
\tX_{c,n}^{(N)}(t) =
\frac{1}{N_c} \sum_{k = 1}^{N_c} \mathbbm{1}\{Q_{c,k}^{(N)}(t)=n\}.
\]
The process $\big(\tvecX^{(N)}(t),\vecY^{(N)}(t)\big)$ is itself
Markovian with state space $E \times \Omega$, where 
\begin{equation}
E \:\dot=\: \chi^C, \qquad \chi \:\dot=\: \{\vecx = (x_n)_{n\in\NatZero}:
\sum_{n\in\NatZero} x_n \leq 1,\: x_{n} \geq 0, \: \forall n\in\NatZero\}.
\label{eqn_Edefn}
\end{equation}
Observe that the population process lies in $E^1\subseteq E$ where
\begin{equation}
E^1 \:\dot=\:
\{\vecx \in E \:: \sum_{n=0}^{\infty} x_{c,n} = 1, \: \forall c \in\CC\}.
\label{eqn_E1defn}
\end{equation}

The possible transitions for the process
$\big(\tvecX^{(N)}(t),\vecY^{(N)}(t)\big)$ may be described as follows:

\textbullet\: A packet arrives at a class-$c$ node having $n$~packets
in its buffer:
this happens at rate $\lambda_c^{(N)}$ times the number of class-$c$
nodes in state~$n$, i.e.,
$\lambda_c^{(N)} N_c \tX^{(N)}_{c,n} = \lambda_c \tX^{(N)}_{c,n}$,
and generates the transition
\[
\begin{cases}
\tX_{c,n}^{(N)} \rightarrow \tX_{c,n}^{(N)} - \frac{1}{N_c} \\
\tX_{c,n+1}^{(N)} \rightarrow \tX_{c,n+1}^{(N)} + \frac{1}{N_c},
\end{cases}
\]
i.e.,
\[
(\tvecX^{(N)},\vecY^{(N)}) \rightarrow (\tvecX^{(N)} +
\frac{1}{N_c}\vece^{c,n+1}_{c,n},\vecY^{(N)}),
\]
\revR{where $\vece^{c_1,n_1}_{c_2,n_2} = \vece_{c_1,n_1} - \vece_{c_2,n_2}$
and $\vece_{c,n} \in \chi^{C}$ has all $0$ entries except a $1$
in position $(c,n)$.}

\textbullet\: A transmission is completed by a class-$c$ node:
this happens at rate $\mu_c^{(N)}= \mu_c$ and only if a class-$c$ node
is transmitting, i.e., $\vecY^{(N)} \in \Omega_{+c}$.
The transition generated is the following
\[
(\tvecX^{(N)},\vecY^{(N)}) \rightarrow (\tvecX^{(N)},\vecY^{(N)}-\vece_c),
\]
\revR{where $\vece_c \in \{0,1\}^C$ has all $0$ entries except a $1$
in position $c$.}
 
\textbullet\: A back-off is completed by a class-$c$ node having
$n > 0$ packets in its buffer:
this happens only if the class activity state allows class-$c$ nodes
to back-off, i.e., $\vecY^{(N)} \in \Omega_{-c}$, and at rate
$\nu_c^{(N)}$ times the number of class-$c$ nodes in state~$n$, i.e., 
$\nu_c^{(N)} N_c \tX^{(N)}_{c,n} =  \nu_c \tX^{(N)}_{c,n}$,
and generates the transition
\[
\begin{cases}
\vecY^{(N)} \rightarrow \vecY^{(N)} + \vece_c \\
\tX_{c,n}^{(N)} \rightarrow \tX_{c,n}^{(N)} - \frac{1}{N_c} \\
\tX_{c,n-1}^{(N)} \rightarrow \tX_{c,n-1}^{(N)} + \frac{1}{N_c},
\end{cases}
\]
i.e.,
\begin{small}
\[
(\tvecX^{(N)},\vecY^{(N)}) \rightarrow
(\tvecX^{(N)} + \frac{1}{N_c} \vece^{c,n-1}_{c,n}, \vecY^{(N)} + \vece_c).
\]
\end{small}

\noindent
{\it Preliminary results for saturated scenario}.
As mentioned earlier, we focus on an unsaturated network where nodes
with empty buffers refrain from competition for the medium.
For later purposes, it is convenient to also consider a related
saturated network where each class behaves as a single node {\it always} 
competing for the medium, and backing-off at rate~$\eta_c\nu_c$,
where $\veceta = (\eta_c)_{\cCC} \in \mathbb{R}_+^C$.
The transmission times of the node associated with class~$c$ are
exponentially distributed with parameter~$\mu_c$.

For compactness, denote $\sigma_c = \nu_c / \mu_c$, and for any
$\veceta \in \mathbb{R}_+^C$,
\[
Z(\omega; \veceta) \:\dot=\: \prodc (\eta_c \sigma_c)^{\omega_c},
\qquad \omega \in \Omega,
\]
and
\begin{equation*}
Z(\veceta) \:\dot=\: \sum_{\omega \in \Omega} Z(\omega; \veceta).
\end{equation*}
The activity process in this fictitious saturated network has
a product-form stationary distribution \cite{BKMS87,VBLP10,WK05}
\begin{equation}
\label{Eqn_piDef}
\pi(\omega; \veceta) =
\frac{Z(\omega; \veceta)}{Z(\veceta)},
\qquad \omega \in \Omega.
\end{equation}

Define now the throughput function
$\vectheta: \mathbb{R}_+^C \rightarrow \Gamma$, where
\begin{equation}
\label{Eqn_throuDef}
\theta_c(\veceta) \:\dot=\: \sum_{\omega \in \Omega} \omega_c \pi(\omega; \veceta) =
\sum_{\omega \in \Omega_{+c}} \pi(\omega; \veceta) = \pi(\Omega_{+c}; \veceta) 
\end{equation}
represents the fraction of time that the node associated with class~$c$
is active in the saturated network \revU{in stationarity}.

As proved in \cite{JW08,VJLB11}, the throughput map~$\vectheta$ is
globally invertible, i.e., for any achievable throughput vector
$\vecgamma \in \text{int}(\Gamma)$ there exists a unique vector 
$\veceta(\vecgamma)\in{\mathbb R}_+^C$ such that
\begin{equation}
\label{Eqn_InverseDef}
\vectheta(\veceta(\vecgamma)) = \vecgamma.
\end{equation}
Intuitively, the $c$-th coordinate of $\veceta(\vecgamma)$ represents
by what factor the back-off rate at node $c$ needs to
accelerate/decelerate in order for the target throughput
vector~$\vecgamma$ to be achieved in stationarity.

\section{Overview of the main results}
\label{Sec_Ove}

In this section we provide an overview of the main results of the paper, 
along with an interpretation and high-level discussion of their
ramifications, before presenting the proofs in the subsequent sections.

\revU{
We first analyze networks with general class-based interference graphs,
and define
\[
\vecX^{(N)}(t) \doteq \tvecX^{(N)}(Nt)
\]
as the {\it fluid version\/} of the population process.
We consider a sequence of processes $\vecX^{(N)}(t)$ as the number
of nodes in the system increases, and establish its weak convergence
to $\vecx(t)$, the solution of a tractable deterministic initial-value
problem as stated in the next theorem.
}

\begin{theorem}
\label{Theo_MFGen}
Assume $\vecX^{(N)}(0) \xrightarrow{N\rightarrow\infty} \vecx^{\infty}\in E^1$
and $\lim_{N\rightarrow\infty}p_{c,N} = p_c > 0$ for every
$c\in\mathcal{C}$.
Then the sequence of processes 
\[
(\vecX^{(N)}(t))\in D_{E^1}[0,\infty)
\]
has a continuous limit $\vecx(t)\in C_{E^1}[0,\infty)$ which is
determined by the unique solution of the initial-value problem
\begin{equation}
\label{eqn_MFDiffEqnGen}
\frac{d \vecx(t)}{dt} = H(\vecx(t)), \qquad
\vecx(0) = \vecx^{\infty},
\end{equation}
where the function $H(\cdot)$ is defined by
\begin{align*}
 H(\vecx) = & \sumc \frac{1}{p_c} \Big(\lambda_c \sum_{n=0}^{\infty} x_{c,n}\vece^{c,n+1}_{c,n} + \pi_{\vecx_0}(\Omega_{-c}) \nu_c \sum_{n=1}^{\infty} x_{c,n}\vece^{c,n-1}_{c,n}\Big)
\end{align*}
\revR{with \[
\pi_{\vecx_0}(\Omega_{-c}) = \sum_{\omega \in \Omega_{-c}} \pi_{\vecx_0}(\omega), \qquad \pi_{\vecx_0}(\omega)\doteq \pi(\omega; \vece - \vecx_0),
\]
$\vecx_0 = (x_{1,0},\ldots,x_{C,0})$, and $\vece$ has every component
equal to~$1$.}
\end{theorem}

\noindent
The major difficulty in the analysis of unsaturated networks arises
from the correlation between the population process $\tvecX^{(N)}(t)$
and the activity process $\vecY^{(N)}(t)$. 
A key observation in the proof of Theorem~\ref{Theo_MFGen} is that
these processes evolve on different time scales, i.e., the population
process evolves $N$~times slower than the activity process. 
Hence, in the mean-field regime, the rapidly changing activity process
``converges to an instantaneous measure on the activity states"
determined by the current fraction of queues which are empty.
This is the reason why the initial-value problem~\eqref{eqn_MFDiffEqnGen}
does not explicitly involve the activity process and the evolution
of the population process at time~$t$ depends only on $\vecx(t)$.  
Specifically, the fraction of class-$c$ nodes with $n \geq 1$ packets
in the buffer ``increases" at rate $\lambda_c$ and ``decreases" at rate
$\nu_c\pi_{\vecx_0}(\Omega_{-c})$, where the measure
$\pi_{\vecx_0}(\Omega_{-c})$ represents the limiting ``instantaneous
measure" on the activity states that allows class-$c$ nodes to back-off.
The argument is thus \revR{based on a stochastic averaging principle
which follows the same lines of ideas of \cite{FR14,HK94}}. \\
 

The initial-value problem~\eqref{eqn_MFDiffEqnGen} is certainly easier
to analyze than the population process in a network with a finite
number of nodes. 
Theorem~\ref{Theo_EquilibChar} states that, under certain necessary
and sufficient conditions, there exists a unique \revT{fixed}
point~$\vecx^*$ for the initial-value problem~\eqref{eqn_MFDiffEqnGen}, 
and provides its characterization in terms of the load of the network.

\begin{theorem}
\label{Theo_EquilibChar}
If $\vecrho \in \mbox{int}(\Gamma)$
and $\vecxi = \veceta(\vecrho) < \vece$,
where $\veceta(\cdot)$ is defined in~\eqref{Eqn_InverseDef},
then $\vecx^* \in E^1$, with
\begin{equation}
\label{EquilibriumRelation}
x_{c,n}^* = (1 - \xi_c) \xi_c^n,
\end{equation}
is the unique \revT{fixed} point of the initial-value
problem~\eqref{eqn_MFDiffEqnGen} in~$E^1$, i.e., $H(\vecx^*) = \veczero$.
\end{theorem}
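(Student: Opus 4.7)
The argument splits cleanly into existence (verify the candidate) and uniqueness (show there is no other fixed point in $E^1$); both steps reduce to a class-wise birth--death detailed balance whose ratio is then pinned down via the product-form measure from~\eqref{Eqn_piDef} and the global invertibility of $\vectheta$ recalled in~\eqref{Eqn_InverseDef}.

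For existence, I substitute $x_{c,n}^*=(1-\xi_c)\xi_c^n$ into $H(\vecx^*)$ and collect the coefficient of each basis vector $\vece_{c,n}$. Reading $H(\vecx)=\veczero$ componentwise gives $-\lambda_c x_{c,0}+\nu_c\,\pi_{\vecx_0}(\Ominc)\,x_{c,1}=0$ at $n=0$ and $\lambda_c(x_{c,n-1}-x_{c,n})+\nu_c\,\pi_{\vecx_0}(\Ominc)(x_{c,n+1}-x_{c,n})=0$ for $n\geq 1$; a short induction combining these equations shows that $H(\vecx)=\veczero$ is equivalent to the level-by-level detailed balance
\[
\lambda_c\, x_{c,n} \;=\; \nu_c\, \pi_{\vecx_0}(\Ominc)\, x_{c,n+1}, \qquad n\geq 0,\ \cCC.
\]
For the geometric candidate this collapses to the scalar identity $\xi_c=\lambda_c/(\nu_c\,\pi_{\vecx_0^*}(\Ominc))$. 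Since $1-x_{c,0}^*=\xi_c$, the effective back-off weights entering $\pi_{\vecx_0^*}$ are exactly $\nu_c\xi_c$, so $\pi_{\vecx_0^*}(\cdot)=\pi(\cdot;\vecxi)$ in the notation of~\eqref{Eqn_piDef}. The product-form relation $Z(\omega+\vece_c;\vecxi)=\xi_c\sigma_c\,Z(\omega;\vecxi)$ for $\omega\in\Ominc$ gives $\pi(\Opluc;\vecxi)=\xi_c\sigma_c\,\pi(\Ominc;\vecxi)$, and combined with $\theta_c(\vecxi)=\rho_c=\lambda_c/\mu_c$ from~\eqref{Eqn_InverseDef} this yields $\pi(\Ominc;\vecxi)=\lambda_c/(\nu_c\xi_c)$, exactly as required. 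The hypothesis $\vecxi<\vece$ ensures summability, so $\vecx^*\in E^1$.

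For uniqueness, I take any fixed point $\vecx\in E^1$, set $p_c=\pi_{\vecx_0}(\Ominc)$, and apply the same induction to conclude $x_{c,n}=r_c^n x_{c,0}$ with $r_c=\lambda_c/(\nu_c p_c)$; membership in $E^1$ forces $r_c<1$ and $x_{c,0}=1-r_c$. Thus $1-x_{c,0}=r_c$ automatically, so $p_c=\pi(\Ominc;\boldsymbol r)$, and the product-form computation repeated with $\boldsymbol r$ in place of $\vecxi$ yields $\vectheta(\boldsymbol r)=\vecrho$; global invertibility of $\vectheta$ then forces $\boldsymbol r=\veceta(\vecrho)=\vecxi$. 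The main technical subtlety is the self-referential nature of the equilibrium equation: $\pi_{\vecx_0}$ depends on $\vecx$ through $\vecx_0$, while the detailed-balance ratio depends on $\pi_{\vecx_0}$. The unlock is that any geometric fixed point automatically satisfies $1-x_{c,0}=r_c$, making the effective back-off weight coincide with the detailed-balance ratio; this collapses the self-consistency problem to the throughput equation $\vectheta(\boldsymbol r)=\vecrho$, which invertibility of $\vectheta$ resolves uniquely.
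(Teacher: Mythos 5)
Your proposal is correct and follows essentially the same route as the paper: the componentwise equation $H(\vecx)=\veczero$ yields the geometric detailed-balance relation $x_{c,n}=\xi_c^n x_{c,0}$ with $\xi_c=\lambda_c/(\nu_c\pi_{\vecx_0}(\Ominc))$, membership in $E^1$ forces $\xi_c=1-x_{c,0}$, and the resulting self-consistency condition reduces to $\vectheta(\vecxi)=\vecrho$, which global invertibility of the throughput map resolves uniquely. Your explicit existence/uniqueness split and the spelled-out product-form identity $\pi(\Opluc;\vecxi)=\xi_c\sigma_c\,\pi(\Ominc;\vecxi)$ merely make explicit steps the paper leaves terse.
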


\noindent
Specifically, the condition $\vecrho \in \mbox{int}(\Gamma)$ is
necessary for existence and ensures the load~$\vecrho$ of the system
is sustainable.
On the other hand, the condition $\vecxi  = \veceta(\vecrho) < \vece$
ensures that the desired throughput vector~$\vecrho$ can be achieved
by a feasible back-off vector.
Although Theorem~\ref{Theo_EquilibChar} only concerns the scaled
population process, combined with Theorem~\ref{Theo_MFGen} it yields
that the stationary distribution of the associated class activity
process is given by $\pi_{\vecx_0^*}\in \mathbb{P}(\Omega)$ in the limit.
In other words, in the limit the stationary distribution of the class
activity process is the same as in a scenario where the aggregate
back-off rate of class~$c$ is a {\it constant fraction}~$\xi_c$ of the
nominal back-off rate $\nu_c$.
This is consistent with the fact that $\xi_c$ is the stationary
fraction of class-$c$ nodes that have non-empty buffers and compete
for the medium, and $\vecxi$ will therefore in the sequel be referred
to as the vector of {\it activity factors}.
The equation $\vecxi = \veceta(\vecrho)$ reflects that the activity
factors must be such that each class~$c$ is active a fraction~$\rho_c$
of the time. \\
 
\begin{figure*}[t]
\centering
\includegraphics[width=0.75\textwidth]{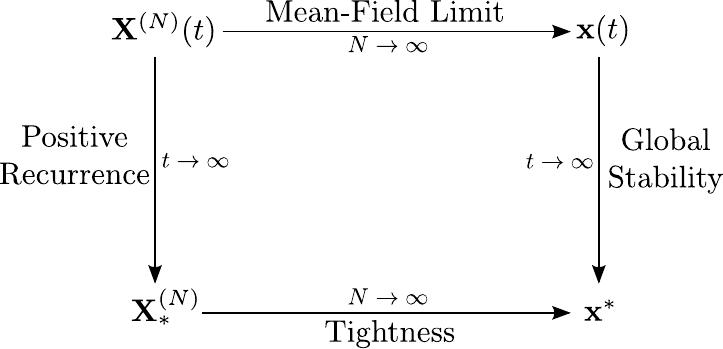}
\caption{Diagram for the interchange of limits.}
\label{FigSquare}
\end{figure*}  

We will use the above convergence properties of the population process
to derive asymptotically exact results for the key performance measures
of the system.
For a rigorous treatment, we focus on the scenario where all the
classes mutually interfere, i.e., the interference graph
$G = (\mathcal{C},\mathcal{E})$ is complete, in which case 
\[
\Gamma = \{\vecgamma \in \mathbb{R}^C: \sum_{c \in \CC} \gamma_c \leq 1\}.
 \]
In this framework, the nodes of every class are allowed to back-off
only when the activity process is in the idle state.
We will show that the vector of activity factors~$\vecxi$
in Theorem~\ref{Theo_EquilibChar} simplifies to
\[
\xi_c = \frac{\lambda_c}{\nu_c \big(1 - \sum_{d \in \CC} \frac{\lambda_d}{\mu_d}\big)},
\]
so that the condition $\vecxi  = \veceta(\vecrho) < \vece$ can be
expressed in explicit form as
\begin{equation}
\label{eqn_Gen_Assum} 
\max_{c \in \CC}\frac{\lambda_c}{\nu_c} < 1 - \sum_{d \in \CC} \frac{\lambda_d}{\mu_d}, 
\end{equation}
which forces the right-hand side to be positive, i.e.,
$\vecrho \in \mbox{int}(\Gamma)$.
When condition~\eqref{eqn_Gen_Assum} applies, Theorem~\ref{Theo_Comp_IL}
holds and asymptotically characterizes the stationary distribution
of the population process as the \revT{fixed} point of the initial-value
problem~\eqref{eqn_MFDiffEqnGen}.  

\begin{theorem}
\label{Theo_Comp_IL}
The sequence of stationary \revU{random variables
$(\vecX_*^{(N)})_{N \geq 1}$ weakly converges} to ${\vecx^*} \in E^1$,
i.e.,
\begin{equation}
\label{eqn_Com_Objective2}
\vecX_*^{(N)} \:\: \xRightarrow{N\rightarrow\infty} \:\:\vecx^*. 
\end{equation}
\end{theorem}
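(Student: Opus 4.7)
The plan is a standard interchange-of-limits argument, built from the three ingredients that Section \ref{Sec_Compl} is advertised as supplying: positive recurrence of the pre-limit process for fixed $N$, tightness of the resulting family of stationary distributions, and global asymptotic stability of the mean-field ODE \eqref{eqn_MFDiffEqnGen}. First I would verify that, under condition \eqref{eqn_Gen_Assum}, the joint Markov process $(\tvecX^{(N)},\vecY^{(N)})$ is positive recurrent, so that a stationary distribution $\vecx^{(N)}$ exists. A natural Foster–Lyapunov function is a weighted first moment of the buffer lengths, $V(\tvecX) = \sumc w_c \sum_{n\ge 1} n\, \tX_{c,n}$, with weights chosen in terms of the inverse-throughput map $\veceta(\cdot)$; the arrival rates $\lambda_c$ and the effective drain rates $\pi_{\vecx_0}(\Omega_{-c})\nu_c$ together with \eqref{eqn_Gen_Assum} give a negative drift outside a compact set, after accounting for the rapid averaging of $\vecY^{(N)}$ in the same spirit as in Theorem \ref{Theo_MFGen}.

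Second, I would promote the same Lyapunov estimates to uniform-in-$N$ moment bounds $\sup_N \mathbb{E}_{\vecx^{(N)}}[\sum_{n\ge 1} n\, \tX_{c,n}] < \infty$ for every $c \in \CC$. Combined with the compactness of $[0,1]^{\NN}$, such a first-moment control prevents mass escaping to $n=\infty$ and yields tightness of $(\vecx^{(N)})_N$ as probability measures on $E^1$ (equipped with an appropriate topology of pointwise convergence augmented by the tail-mass control). Third, from Section \ref{Sec_Compl} I would invoke global asymptotic stability: every solution of \eqref{eqn_MFDiffEqnGen} starting in $E^1$ with finite first moment converges to the unique fixed point $\vecx^*$ provided by Theorem \ref{Theo_EquilibChar}.

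To close the argument, let $\vecx^{(N_k)} \Rightarrow \mu^\infty$ along any subsequence permitted by tightness, and run the stationary process $\tvecX^{(N_k)}(N_k\, \cdot)$ with initial law $\vecx^{(N_k)}$. By Theorem \ref{Theo_MFGen}, the sequence converges weakly in $D_E[0,\infty)$ to the deterministic trajectory $\vecx(t)$ solving \eqref{eqn_MFDiffEqnGen} with random initial data $\vecx(0)\sim \mu^\infty$. Stationarity passes to the limit, so $\vecx(t)\sim \mu^\infty$ for every $t\ge 0$; combined with global stability, this forces $\mu^\infty = \delta_{\vecx^*}$. Since the subsequence was arbitrary, $\vecx^{(N)} \Rightarrow \vecx^*$, which is precisely \eqref{eqn_Com_Objective2}.

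The hardest step I expect is tightness. The difficulty is that the Lyapunov drift inherits the fast-time-scale dependence on $\vecY^{(N)}$ through the measures $\pi_{\vecx_0}(\Omega_{-c})$, which are not directly observable at a single instant: one needs to show that time-averages of the fast activity process already produce the coercive drift suggested by the mean-field, and that this can be done with constants uniform in $N$. Care is also needed because $E^1$ is infinite-dimensional, so a bare marginal bound at each coordinate $(c,n)$ is not enough; a genuine moment bound (or equivalent uniform integrability) is required to preclude escape of probability mass to infinity along the buffer-length axis, and this is precisely where the rate condition \eqref{eqn_Gen_Assum} will be doing the real work.
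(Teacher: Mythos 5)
Your overall architecture coincides with the paper's: existence of $\vecx^{(N)}$ via positive recurrence, tightness of the family $(\vecx^{(N)})_N$, and identification of every subsequential limit by feeding it as initial data into the mean-field limit of Theorem \ref{Theo_MFGen}. Where you genuinely diverge is in how the two quantitative ingredients are obtained. You propose a direct Foster--Lyapunov analysis of the pre-limit Markov chain, with a weighted first-moment Lyapunov function whose drift must be shown to be uniformly negative after averaging out the fast activity process $\vecY^{(N)}$. The paper instead exploits the complete-interference-graph assumption to identify the network exactly with a $1$-limited polling system with random routing: positive recurrence under \eqref{eqn_Gen_Assum} then follows from Down's fluid-limit stability theorem for polling models (Proposition \ref{Prop_Compl_PR}), and the uniform-in-$N$ bound $\sup_N m_c(\vecx^{(N)})<\infty$ needed for tightness is read off from the Boxma--Weststrate pseudo-conservation law, which controls $\sum_{c}\frac{\lambda_c}{\mu_c}\big(1-\frac{\lambda_c}{\nu_c(1-\rho)}\big)\mathbb{E}[W_c^{(N)}]$ explicitly and shows that the stationary mean waiting times grow at most linearly in $N$ (Proposition \ref{Prop_Comp_Tightness}). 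Your route buys generality---it does not rely on the polling correspondence and could in principle extend beyond complete graphs---while the paper's buys completeness: the pseudo-conservation law delivers the moment bound with no time-scale-separation argument at stationarity. (The paper's Appendix \ref{App_Prop_Compl_PR_Alt} does give a Lyapunov proof of positive recurrence, but with a quadratic function, for a single class, and for an embedded chain that already absorbs the activity process; it is not used for the moment bound.)

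This difference matters because the step you yourself single out as hardest is exactly the one you have not carried out. Establishing that the time-averaged drift of your weighted first moment is coercive uniformly in $N$ requires controlling the occupancy measure of the fast process $\vecY^{(N)}$ in stationarity, with constants independent of $N$; Theorem \ref{Theo_MFGen} provides such averaging only along the transient dynamics on compact time intervals, so it cannot simply be invoked. As written, your tightness step is a plausible program rather than a proof, and it is precisely the point where the paper's reduction to polling systems does the real work. A smaller discrepancy: to identify the subsequential limit you invoke global asymptotic stability of $\vecx^*$, whereas the paper's closing argument in Section \ref{Sec_Compl_TightIL} claims to use only uniqueness of the equilibrium point. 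Your stronger hypothesis is available (Proposition \ref{Prop_Compl_GlobStab}) and arguably makes the ``stationarity passes to the limit'' step more robust, since without stability a limit of invariant laws is a priori only an invariant measure of the semiflow rather than the point mass at the fixed point.
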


\noindent
In order to prove Theorem~\ref{Theo_Comp_IL}, we show that the
interchange of limits displayed in Figure~\ref{FigSquare} holds. 
Specifically, \revR{the methodology developed} reduces the derivation
of the stationary distribution of an intractable $N$-dimensional
Markov process to the computation of the \revT{fixed} point
of a low-dimensional fixed-point equation. 
 
 
Theorem~\ref{Theo_Comp_IL} is exploited so as to obtain approximations
for the performance measures of the system.
In particular, denote by $Q_c^{(N)}$ the stationary queue length
at a class-$c$ node, and by $W_c^{(N)}$ and $S_c^{(N)}$ the stationary
waiting time and sojourn time of a packet at a class-$c$ node,
respectively.
Assume condition~\eqref{eqn_Gen_Assum} holds.

\begin{theorem}
\label{Theo_QWSNcWeakConvergence}
For every class~$c$,
\[
Q_c^{(N)} \Rightarrow \bar{Q}_c, \qquad
\frac{\lambda_c}{N_c} S_{c}^{(N)} \Rightarrow \bar{S}_c, \qquad
\frac{\lambda_c}{N_c} W_{c}^{(N)} \Rightarrow \bar{W}_c,
 \]
where {\normalfont 
\[
\bar{Q}_c \sim \text{Geo}(\xi_c), \qquad \bar{W}_c \sim \bar{S}_c \sim
\text{Exp}\big(\frac{1-\xi_c}{\xi_c}\big),\qquad c \in \mathcal{C}.
 \]}
\end{theorem}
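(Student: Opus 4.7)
The plan is to exploit the separation of time scales inherent in Theorem~\ref{Theo_MFGen}: the buffer occupancy at a single node evolves on the slow time scale of order $N_c/\lambda_c$, while the class activity process equilibrates on an $O(1)$ time scale. A tagged class-$c$ node, observed on its natural slow time scale, should therefore look asymptotically like an M/M/1 queue whose effective service rate is obtained by averaging the back-off completion rate against the fast equilibrium of the activity process.

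First I would handle $Q_c^{(N)}\Rightarrow \bar Q_c$. By class-wise exchangeability, $\pr{Q_c^{(N)} = n} = \expect{\tX_{c,n}^{(N)}}$ under the stationary law $\vecx^{(N)}$. Theorem~\ref{Theo_Comp_IL} identifies $\vecx^{(N)} \Rightarrow \vecx^{*}$ with $x_{c,n}^{*} = (1-\xi_c)\xi_c^{n}$; since $\tX_{c,n}^{(N)} \in [0,1]$, bounded convergence promotes this weak convergence to convergence in expectation, and the resulting pointwise convergence of p.m.f.'s yields $Q_c^{(N)}\Rightarrow \mathrm{Geo}(\xi_c)$.

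Next, for the scaled sojourn and waiting times I would proceed via a tagged-packet analysis. By PASTA, a Poisson-arriving packet sees the time-stationary state, so the number $K^{(N)}$ of packets ahead of it in its node's buffer converges to $\bar Q_c$ in distribution. Its sojourn time then decomposes as a sum of $K^{(N)}+1$ effective service times $T_i^{(N)}$, each comprising a back-off period (during which the counter advances only while the activity state lies in $\Ominc$) followed by an $\mathrm{Exp}(\mu_c)$ transmission. On the rescaled time axis $\tau = t\lambda_c/N_c$, the activity process relaxes instantaneously and spends the equilibrium fraction $\pi_{\vecx_{0}^{*}}(\Ominc) = 1 - \sum_{d} \lambda_d/\mu_d$ of its time in $\Ominc$, so that the effective back-off completion rate at the tagged (non-empty) node becomes $\nu_c^{(N)}\pi_{\vecx_{0}^{*}}(\Ominc)$, which after the time change $\tau = t\lambda_c/N_c$ equals $1/\xi_c$. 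A time-change or generator argument therefore yields $(\lambda_c/N_c)\,T_i^{(N)}\Rightarrow \mathrm{Exp}(1/\xi_c)$, the transmission piece being $O(1/N)$ in scaled units and disappearing in the limit. Combining these asymptotics, $(\lambda_c/N_c)\,S_c^{(N)}$ converges to $\sum_{i=1}^{\bar Q_c + 1} E_i$ with $E_i$ i.i.d.\ $\mathrm{Exp}(1/\xi_c)$; a direct Laplace transform computation identifies this geometric sum of exponentials as $\mathrm{Exp}((1-\xi_c)/\xi_c)$. Because $S_c^{(N)} - W_c^{(N)}$ is a single $\mathrm{Exp}(\mu_c)$ transmission time, whose rescaling vanishes, the same limit carries over to $(\lambda_c/N_c)\,W_c^{(N)}$.

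The main obstacle is the averaging step in the asymptotic analysis of $T_i^{(N)}$: one must show that, over a service time of order $N_c/\lambda_c$, the empirical fraction of time the medium belongs to $\Ominc$ concentrates at $\pi_{\vecx_{0}^{*}}(\Ominc)$, and that this averaging remains valid after conditioning on the tagged node being non-empty. Establishing this rigorously calls for a propagation-of-chaos type argument to decouple the tagged node from the remaining $N-1$ nodes, together with a uniform integrability or tightness estimate that transfers the averaging from the transient mean-field convergence of Theorem~\ref{Theo_MFGen} to the stationary regime, invoking once more the interchange of limits in Theorem~\ref{Theo_Comp_IL}.
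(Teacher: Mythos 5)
Your argument for $Q_c^{(N)} \Rightarrow \bar Q_c$ is correct and is essentially the paper's: exchangeability, the interchange of limits in Theorem~\ref{Theo_Comp_IL}, and the identification of the limit with $x^*_{c,n}=(1-\xi_c)\xi_c^n$.

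For the waiting and sojourn times, however, your route diverges from the paper's and contains a genuine gap at its central step. The paper does not perform a tagged-packet/time-change analysis at all: it invokes the distributional form of Little's law, $\phi_{\bar W_c^{(N)}}(z)=F_{Q_c^{(N)}}(1-z)$ for $z\in[0,1]$, which converts the already-established geometric limit of $Q_c^{(N)}$ directly into convergence of the Laplace--Stieltjes transform of $\tfrac{\lambda_c}{N_c}W_c^{(N)}$ on a line segment; a tightness estimate (itself a consequence of Little's law) plus Prohorov and an analytic-continuation argument then identify every subsequential limit as $\text{Exp}\big(\tfrac{1-\xi_c}{\xi_c}\big)$, and the sojourn time follows because the scaled transmission time $\tfrac{\lambda_c}{N_c}U_c^{(N)}$ vanishes. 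Your proposal instead decomposes $S_c^{(N)}$ into $K^{(N)}+1$ effective service periods and asserts that each, after the time change $\tau=t\lambda_c/N_c$, converges to $\text{Exp}(1/\xi_c)$ because the activity process spends a fraction $\pi_{\vecx_0^*}(\Omega_{-c})$ of time in $\Omega_{-c}$. You correctly flag this averaging step as the obstacle, but it is not a technicality to be deferred --- it is the entire difficulty. Theorem~\ref{Theo_MFGen} controls the occupation measure $\vecalpha^{(N)}$ over fixed fluid-time horizons from converging deterministic initial states; it does not by itself give concentration of the occupation measure over the random, order-$N_c/\lambda_c$ waiting interval of a tagged packet \emph{in stationarity}, conditioned on the tagged node being non-empty throughout, nor the asymptotic independence of the successive back-off completion epochs from $K^{(N)}$ that your geometric-sum computation requires. (There are also minor bookkeeping issues --- the residual service of the packet in transmission at arrival, and the fact that $W_c^{(N)}$ excludes the tagged packet's own transmission --- but these are indeed $O(1/N)$ after scaling.) As written, the proof of the exponential limit is therefore incomplete; either supply the stationary-regime averaging and decoupling argument you sketch, or replace this portion with the distributional Little's law route, which bypasses it entirely.
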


\begin{theorem}
\label{Theo_QWSNcExpConvergence}
For every class~$c$, 
\[
\mathbb{E}[ Q_c^{(N)}] \rightarrow \mathbb{E}[\bar Q_c], \qquad
\frac{\lambda_c}{N_c}\mathbb{E}[W_c^{(N)}] \rightarrow \mathbb{E}[\bar W_c], \qquad  
\frac{\lambda_c}{N_c}\mathbb{E}[S_c^{(N)}] \rightarrow \mathbb{E}[\bar S_c].   
\]
\end{theorem}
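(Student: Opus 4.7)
The strategy is to combine the weak convergence established in Theorem~\ref{Theo_QWSNcWeakConvergence} with a uniform integrability argument.

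First, I would reduce the three claims to a single one via Little's law. Viewing a tagged class-$c$ node as a single-server queue with arrival rate $\lambda_c/N_c$, Little's law yields $\mathbb{E}[Q_c^{(N)}] = (\lambda_c/N_c)\mathbb{E}[W_c^{(N)}]$ and, applied to the total number at the node, $(\lambda_c/N_c)\mathbb{E}[S_c^{(N)}] = \mathbb{E}[Q_c^{(N)}] + \lambda_c/(N_c\mu_c)$, so the three scaled expectations differ by $o(1)$ as $N\to\infty$. Since the three limiting expectations coincide ($\mathbb{E}[\bar Q_c]=\mathbb{E}[\bar W_c]=\mathbb{E}[\bar S_c]=\xi_c/(1-\xi_c)$), it suffices to prove $\mathbb{E}[Q_c^{(N)}]\to\mathbb{E}[\bar Q_c]$.

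Second, since $Q_c^{(N)}\Rightarrow \bar Q_c$, Fatou's lemma immediately gives $\liminf_N \mathbb{E}[Q_c^{(N)}]\geq\mathbb{E}[\bar Q_c]$, so it remains to establish uniform integrability of $\{Q_c^{(N)}\}_N$. I would attempt this via the exponential Lyapunov function $V(\tvecX)=\sum_{n\geq 0}\beta^n \tX_{c,n}$ with $\beta\in(1,1/\xi_c)$, for which $\mathbb{E}[V(\tvecX^{(N)})] = \mathbb{E}[\beta^{Q_c^{(N)}}]$ by exchangeability. Writing out the stationary balance $\mathbb{E}_{\pi^{(N)}}[\mathcal{L}^{(N)}V]=0$ decomposes into a positive arrival contribution proportional to $\mathbb{E}[V]$ and a negative back-off contribution proportional to $\mathbb{E}[\mathbbm{1}\{\vecY^{(N)}\in\Omega_{-c}\}(V-\tX_{c,0})]$. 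If $\vecY^{(N)}$ and $\tvecX^{(N)}$ were independent under $\pi^{(N)}$, the balance would close as $\mathbb{E}[V] = \mathbb{E}[\tX_{c,0}^{(N)}]/(1-\beta\xi_c)\leq 1/(1-\beta\xi_c)$, delivering the required $N$-uniform bound on an exponential moment and hence uniform integrability of every polynomial in $Q_c^{(N)}$.

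The main obstacle is therefore quantifying the correlation between the activity state $\vecY^{(N)}$ and the population state $\tvecX^{(N)}$ under stationarity. The time-scale separation underlying Theorem~\ref{Theo_MFGen}, namely that the activity process evolves $N$ times faster than the population process, forces $\vecY^{(N)}$ to equilibrate to $\pi_{\tvecX_0^{(N)}}$ conditionally on the current population state, but this statement must be made quantitative on large-queue events of the form $\{Q_c^{(N)}\geq n\}$. I would leverage the positive recurrence and tightness already proved in Section~\ref{Sec_Compl}, together with a small $\varepsilon$-perturbation of the limiting idle fraction $\pi_{\vecx_0^*}(\Omega_{-c})$, in order to couple the tagged class-$c$ queue from below to an $M/M/1$ queue of load $\xi_c+O(\varepsilon)<1$. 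Such a stochastic domination yields the desired uniform exponential moment bound, closes the uniform integrability argument, and hence promotes the weak convergence of Theorem~\ref{Theo_QWSNcWeakConvergence} to convergence of first moments.
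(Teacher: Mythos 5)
Your overall skeleton (reduce everything to one quantity via Little's law, get the lower bound from Fatou, and then fight for an upper bound) starts in the right place, and the first two steps are sound. The problem is that the entire difficulty of the theorem is concentrated in the step you defer: establishing an $N$-uniform exponential (or even first) moment bound on $Q_c^{(N)}$ in stationarity. Your proposed route -- a quantitative time-scale separation on large-queue events, yielding stochastic domination of the tagged queue by an $M/M/1$ queue of load $\xi_c+O(\varepsilon)$ -- is not carried out and is not obviously carriable. The obstruction is circular: the tagged queue sees an idle fraction close to $\pi_{\vecx_0^*}(\Omega_{-c})$ only if the \emph{other} queues are already concentrated near their stationary profile, which is precisely the kind of uniform control you are trying to establish. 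A non-circular, worst-case domination would have to use the saturated idle fraction $1/(1+\sum_d \nu_d/\mu_d)$, and the resulting $M/M/1$ load $\frac{\lambda_c}{\nu_c}(1+\sum_d \nu_d/\mu_d)$ is in general not below $1$ under condition \eqref{eqn_Gen_Assum}, so that coupling gives nothing. Nothing proved earlier in the paper (the mean-field limit on compact time intervals, positive recurrence, tightness of $\vecx^{(N)}$) supplies the uniform-in-$N$ moment control your argument needs.

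The paper avoids this entirely by exploiting the exact equivalence, for the complete interference graph, with a $1$-limited polling system with random routing: the pseudo-conservation law of Boxma--Weststrate gives, for every finite $N$, an exact identity for the weighted sum $\sum_c K_{c,N}\frac{\lambda_c}{N_c}\mathbb{E}[W_c^{(N)}]$ with strictly positive weights $K_{c,N}=\frac{p_{c,N}}{\mu_c}\bigl(1-\frac{\lambda_c}{\nu_c}\frac{1}{1-\rho}\bigr)$, whose right-hand side converges to $\sum_c K_c\,\mathbb{E}[\bar W_c]$. Combined with the Fatou lower bound $\liminf_N \frac{\lambda_c}{N_c}\mathbb{E}[W_c^{(N)}]\geq\mathbb{E}[\bar W_c]$ for each $c$, positivity of the weights sandwiches every individual term, giving $\frac{\lambda_c}{N_c}\mathbb{E}[W_c^{(N)}]\to\mathbb{E}[\bar W_c]$; the sojourn time follows by adding the transmission time, and the queue length by the uniform integrability this implies together with Little's law. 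If you want to salvage your approach you would need to prove the uniform moment bound by some other means (e.g.\ an explicit Lyapunov computation that handles the $\vecY$--$\vecX$ correlation for finite $N$); as written, the proposal does not prove the theorem.
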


\noindent
These theorems show that, given any node in the system, its stationary
queue length converges in distribution and expectation to a geometric
random variable, while the scaled waiting time and sojourn time of a packet
in its buffer converge to exponentially distributed random variables.
These limits can be used as approximations for the performance
of finite-node systems, which are provably exact as the number of nodes
in the network grows large, and remain very accurate when the size of the network is moderate \cite[Section 4.6]{Cecchi18}.

\section{Mean-field limit in a general framework}
\label{Sec_Gen}

In this section we present the proofs of Theorems~\ref{Theo_MFGen}
and~\ref{Theo_EquilibChar} as stated in Section~\ref{Sec_Ove}.
 
\subsection{Derivation of the mean-field limit}
\label{Sec_Gen_FL}
 
Theorem~\ref{Theo_MFGen} describes the weak limit of the sequence
of processes $\{\vecX^{(N)}(t)\}_{N\in\mathbb{N}}$, which is known as
the mean-field limit of the population process.
In order to prove Theorem~\ref{Theo_MFGen}, we need to go through
various steps which we briefly outline here. 
\begin{enumerate}
\item In Section~\ref{Sec_Gen_FL_UnitPoisson}, a Poisson representation
for the prelimit process
\[
\big(\tvecX^{(N)}(t),\vecY^{(N)}(t)\big)
\]
is provided.
This representation allows us to easily derive the dynamics of the
process on a fluid time scale.
\item In Section~\ref{Sec_Gen_FL_YRand} a detour is needed in order
to ensure the existence of the limiting process. 
Specifically, the class activity process $\vecY^{(N)}$ is replaced
by a cumulative time 
process $\vecalpha^{(N)}$, so that the 
evolution of the model on fluid time scale can be equivalently
described via the process 
\begin{equation}
\label{eqn_XalphaPrelimitProcess}
\big(\vecX^{(N)}(t),\vecalpha^{(N)}(t)\big).
\end{equation}
Finally, the sequence of processes~\eqref{eqn_XalphaPrelimitProcess}
is shown to weakly converge as $N \rightarrow \infty$ to a limiting process
\begin{equation}
\label{eqn_XalphaLimitProcess}
\big(\vecx(t),\vecalpha(t)\big).
\end{equation}
\item In Section~\ref{Sec_Gen_FL_FL} we characterize the limiting
process~\eqref{eqn_XalphaLimitProcess}.
\item As a last step, in Section~\ref{Sec_Gen_FL_WeakLimalpha},
we describe how the limiting process $\vecalpha(t)$ at time~$t$ is
uniquely determined by the value $\vecx(t)$. 
Hence a self-contained deterministic initial-value problem governing
the behavior of $\vecx(t)$ is obtained.
\end{enumerate}
 
In preparation for the analysis, we briefly recall a few useful
properties of the topological space where the population process evolves.
The sample paths of $\vecX^{(N)}(t)$ lie in $D_E[0,\infty)$, i.e.,
the set of the cadlag functions from $[0,\infty)$ in $E = \chi^C$,
where the space~$E$ is defined in~\eqref{eqn_Edefn}.  
Define on~$E$ the metric $\rho(\cdot,\cdot)$ such that
\[
\rho\big((\vecx^{(1)}_c)_{\cCC},(\vecx^{(2)}_c)_{\cCC}\big) =
\sumc \rho_{1} (\vecx^{(1)}_c,\vecx^{(2)}_c),
\]
where $\vecx_c = (x_{c,n})_{n \in \NatZero} \in \chi$ and $\rho_{1}$
is a metric on $\mathbb{R}^{\infty}$, i.e.,
\[
\rho_1(\vecx^{(1)},\vecx^{(2)}) = \sum_{n\in\NatZero} 2^{-n}
\frac{|x_n^{(1)}-x_n^{(2)}|}{1 + |x_n^{(1)}-x_n^{(2)}|}.
\]
Note that $(\mathbb{R}^{\infty}, \rho_1)$ is separable and complete,
and $\chi$ is compact in $(\mathbb{R}^{\infty}, \rho_1)$ as each
coordinate lies in $[0,1]$, see \cite[page 219]{Billingsley68}.
From \cite[Section M6, pages 240-241]{Billingsley99}, the following
lemma is therefore obtained.

\begin{lemma}
\label{ECompact}
The subset $E = \chi^{C}$ is complete, separable, and compact
under the product topology induced by the metric~$\rho$ in the
space~$\prodc \mathbb{R}^{\infty}$.
\end{lemma}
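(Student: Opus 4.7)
The plan is to reduce the claim to standard facts about finite products of metric spaces, leveraging the two inputs already cited in the excerpt: $(\mathbb{R}^\infty,\rho_1)$ is a complete separable metric space, and $\chi \subset \mathbb{R}^\infty$ is compact. Since the index set $\CC$ is finite, all three properties should transfer to the finite product $E = \chi^C$ without any additional work beyond bookkeeping.

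First I would verify that the metric $\rho = \sumc \rho_1$ on $\prodc \mathbb{R}^\infty$ generates the product topology. This is immediate because $\rho$ is a finite sum of the coordinate metrics over the finite class set $\CC$, so $\rho$-convergence of a sequence is equivalent to $\rho_1$-convergence in each coordinate, which is precisely the definition of convergence in the product topology on $\prodc \mathbb{R}^\infty$. In particular, the topology on $E$ inherited as a subspace coincides with the product topology on $\chi^C$.

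Second, I would handle separability and completeness of $(E,\rho)$. For separability, pick a countable dense set $D \subseteq \chi$, which exists because $\chi$ sits inside the separable space $(\mathbb{R}^\infty,\rho_1)$; then $D^C$ is a countable subset of $E$, and density follows at once from the representation $\rho = \sumc \rho_1$ and coordinate-wise approximation. For completeness, any $\rho$-Cauchy sequence in $E$ induces a $\rho_1$-Cauchy sequence in each of the finitely many coordinates; each such coordinate sequence lies in $\chi$, which is closed in $(\mathbb{R}^\infty,\rho_1)$ (being compact in a Hausdorff space), and hence converges within $\chi$. The tuple of coordinate limits then lies in $\chi^C = E$, and the convergence in $\rho$ follows from the finite-sum structure.

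Third, for compactness I would invoke the standard finite-product theorem for compact metric spaces (a very special case of Tychonoff): since $\chi$ is compact and $\CC$ is finite, any sequence in $E = \chi^C$ admits, by iteratively extracting subsequences along the finitely many classes $\cCC$, a subsequence whose every coordinate converges in $\chi$, hence converging in $\rho$ to a point of $E$. No substantive obstacle is expected here; the lemma is essentially a bookkeeping step to pin down the topological setting for the subsequent weak convergence analysis of $\vecX^{(N)}$, and the only mildly delicate point is the equivalence between $\rho$-convergence and product-topology convergence, which is disposed of in the first step.
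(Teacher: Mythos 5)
Your proposal is correct and follows essentially the same route as the paper, which simply invokes the standard finite-product facts from Billingsley (Section M6) applied to the already-established compactness of $\chi$ and the completeness and separability of $(\mathbb{R}^{\infty},\rho_1)$. You merely spell out the bookkeeping (product topology from the finite sum of metrics, coordinate-wise Cauchy/dense-set/subsequence arguments) that the paper delegates to the reference.
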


\noindent 
In the subsequent analysis we will work with $D_{E}[0,\infty)$ using
the metric~$d$, an exponentially weighted version of the Skorohod metric,
see~\cite[page 117]{EK86}.
Under this metric $D_E[0,\infty)$ is complete and separable as $(E,\rho)$
is itself complete and separable, see~\cite[Theorem 5.6, page 121]{EK86}.

\subsubsection{Unit Poisson process representation}
\label{Sec_Gen_FL_UnitPoisson}

In this subsection we describe the fluid-time marginals of the
prelimit Markov population process $\big(\tvecX^{(N)}(t),\vecY^{(N)}(t)\big)$
in terms of infinite sequences of unit Poisson processes. 
For each $c\in \CC$, and then for each $n \in \ZO$, the random quantity
$\pNa(t)$ determines the class-$c$ arrivals to queues with $n$~packets
in the buffer during the time interval $[0,t)$.
We similarly define $\pNb(t)$ for each $c \in \CC$, $n \in \NN$,
to determine the back-offs and finally $\pNt(t)$ determines the
process of transmissions for each $c \in \CC$.
These processes are supposed mutually independent and defined on a common
probability space $\lb \Omega_F, \mathcal{F}^F, \mathbb{P} \rb$.
We also define the centered versions of these processes to be
$\tNa(s) = \pNa(s) - s$ for each $c \in \CC$, $n \in \ZO$,
with corresponding definitions for the back-offs and the transmissions.
These are martingales with respect to their natural filtrations.

To obtain the Poisson process representation, 
we first fix the initial conditions, which are deterministic.
Realizations of the above unit Poisson processes are then drawn
and the (unscaled) population random variables $\tvecX$, $\vecY$ are
obtained as solutions to the following equations, where fluid time is~$t$.
That is,
\begin{align*}
\tvecX^{(N)}(t) =\:&\tvecX^{(N)}(0) + \sumc \sum_{n=0}^{\infty} \frac{\vece^{c,n+1}_{c,n}}{N_c^{(N)}}\tilde A_{c,n}(t) + \sumc\sum_{n=1}^{\infty} \frac{\vece^{c,n-1}_{c,n}}{N_c^{(N)}}\tilde D_{c,n}(t) 
\end{align*}
\begin{align*}
\vecY^{(N)}(t)=\:&\vecY^{(N)}(0)  - \sum_{c\in \CC} \vece_c \tilde V_{c}(t) + \sum_{c\in \CC} \sum_{n=1}^\infty \vece_c \tilde D_{c,n}(t),
\end{align*}
where
\begin{align*}
\tilde A_{c,n}(t) & =  \pNa\Bigl( \lambda^{(N)}_c \int_0^t N_c^{(N)}\tilde X_{c,n} ^{(N)}(s) ds \Bigr),  \\
\tilde D_{c,n}(t) & =  \int_0^{t} \mathbbm{1}\{\vecY^{(N)}(s^-)\in \Ominc\} \dPoiss_{\text{B},c,n}\Bigl( \nu_c^{(N)} \int_0^s N^{(N)}_c \tX^{(N)}_{c,n}(u) du \Bigr), \\
\tilde V_{c}(t) & =   \int_0^{t} \mathbbm{1}\{\vecY^{(N)}(s^-)\in \Opluc\} \dPoiss_{\text{T},c}\Bigl( \mu_c^{(N)} \int_0^s  du \Bigr).
\end{align*}
As observed in the discussion in~\revU{\cite{Kurtz80}}, the solutions
to these equations can be obtained by fixing the sample paths of the
Poisson processes
$\mathcal{N}_{A,c,n}$, $\mathcal{N}_{B,c,n}$, $\mathcal{N}_{T,c}$ above.
It can then be seen \revU{from the above} that, when $\tvecX^{(N)}(0) \in E^1$,
then the sample paths of the process $(\tvecX^{(N)}(t), \vecY^{(N)}(t))$
lie in $D_{E^1\times\Omega}$. 
%

We now rewrite these equations in a more compact form, working
componentwise.
When possible, the $N$ superscript has been omitted for conciseness.
\begin{align}
\Xck(t) &= \Xck(0) + \Ack{c,n-1}(t) - \Ack{c,n}(t) + \Dck{c,n+1}(t) - \Dck{c,n}(t), \label{eqn_Xk} \\
\Yc(Nt) &= \Yc(0) + N_c\sum_{n=1}^\infty\Dck{c,n}(t) - \Vc(t), \label{eqn_Y}   
\end{align}
where
\begin{align}
\Ack{c,n}(t) &= \frac{1}{N_c} \pNa\Bigl( N\lambda_c \int_0^t \Xck(s) ds \Bigr), \label{eqn_APoiss} \\
\Dck{c,n}(t) &= \frac{1}{N_c} \int_0^t \mathbbm{1}\{\vecY(Ns^-) \in \Ominc \} \dPoiss_{B,c,n}\Bigl( N \nu_c \int_0^s \Xck(u) du\Bigr),  \label{eqn_DPoiss} \\
\Vc(t) & = \int_0^t \Indic{\vecY(Ns-) \in \Opluc}\dPoiss_{T,c}\Bigl( N \mu_c \int_0^s du \Bigr), \label{eqn_TxPoiss}
\end{align}
for each $c \in C$, $n \in \ZO$.

Equation~\eqref{eqn_Xk} expresses the change in the fraction
of class-$c$ nodes with $n$ packets, i.e., arrivals to a queue
with $n-1$ packets or departures from a queue with $n+1$ packets
increment this fraction whereas arrivals or departures to component~$n$
result in a decrement.
Equation~\eqref{eqn_Y} describes the dynamics of the class activity
process and $\Yc$ is incremented whenever there is a back-off
from any non-empty class-$c$ queue, and returns to inactivity when the
corresponding packet has been transmitted.
Equation~\eqref{eqn_APoiss} defines the sequence of arrivals by fluid
time~$t$, the (stochastic) intensity is proportional to the fraction
of queues for each component, with the convention $A_{c,-1}(t) \equiv 0$
for all~$t$. 
Equation~\eqref{eqn_DPoiss} expresses the back-offs as an integral
of a previsible process over a Poisson process, again with intensity
varying according to the fraction of the corresponding component. 
Note that $D_{c,0}(t) = 0$ as no back-offs can occur from empty queues.
Finally the transmission process~\eqref{eqn_TxPoiss} is again
an integral of a previsible process over a scaled version of the
original Poisson process.

The expressions \eqref{eqn_Xk}--\eqref{eqn_TxPoiss} are preferably
written in the following martingale form:
\begin{align}
\vecX(t) = & \vecX(0) + \sum_{\cCC} \sum_{n=0}^\infty \vece^{c,n+1}_{c,n} \frac{\lambda_c}{p_{c,N}} \int_0^t \Xck(s) ds \nonumber \\
& + \sum_{\cCC} \sum_{n=1}^\infty \vece^{c,n-1}_{c,n} \frac{\nu_c}{p_{c,N}} \int_0^t 
\mathbbm{1}\{ \vecY(Ns^-) \in \Ominc \} \Xck(s) ds + \vecXI(t), \label{eqn_MartX} \\
\Yc(Nt) = & \Yc(0) + N\nu_c \int_0^t \mathbbm{1}\{\vecY(Ns^-) \in \Ominc\} (1 - X_{c,0}(s)) ds \nonumber \\
& - N\mu_c \int_0^t \Indic{\vecY(Ns^-) \in \Opluc} ds + \psi_c(t), \label{eqn_MartY}
\end{align}
where 
\begin{align} 
\Xi_{c,n}(t) = & \frac{1}{N_c} \ls M_{A,c,n-1}(t) - M_{A,c,n}(t) + M_{B,c,n+1}(t) - M_{B,c,n}(t) \rs, \label{eqn_Marteta} \\
\psi_c(t) = & \sum_{n=1}^\infty M_{B,c,n}(t) - M_{T,c}(t), \label{eqn_Martpsi} \\
M_{A,c,n}(t) = & \tNa \Bigl( N \lambda_c \int_0^t \Xck(s) ds \Bigr), \label{eqn_MartA} \\
M_{B,c,n}(t)  = & \int_0^t \Indic{ \vecY(Ns^-) \in \Ominc } \dtNb \Bigl(N\nu_c \int_0^s \Xck(u) du\Bigr), \label{eqn_MartB} \\
M_{T,c}(t) = & \int_0^t \Indic{ \vecY(Ns^-) \in \Opluc} \dtNT\Bigl(N \mu_c \int_0^s du \Bigr). \label{eqn_MartT}
\end{align}

\noindent
\revR{By means of arguments along the same lines as given
in~\revU{\cite{Kurtz80}}, it can be proved that the processes $M_{A,n}$,
$M_{B,n}$, $M_T$, and $\vecXI$ are locally square integrable martingales.}

\subsubsection{Representation of {\bf Y} }
\label{Sec_Gen_FL_YRand}

Since the weak limit of the sequence of processes $\vecY^{(N)}$ does not exist in~$D$,
we introduce the cumulative time process $\vecalpha^{(N)}$,
where $\alpha_\omega^{(N)}(t)$ denotes the cumulative time spent
in state~$\omega$ by the activity process $\vecY^{(N)}$ in the
interval \revT{$[0,Nt]$}. 
That is, for each $\omega \in \Omega$, $t \geq 0$,
\begin{equation}
\alpha_\omega^{(N)}(t) \doteq
\frac{1}{N} \int_0^{Nt} \mathbbm{1}\{\vecY^{(N)}(s^-) = \omega\} ds.
\label{eqn_alphadefn}
\end{equation}
Observe that $\sum_{\omega}\alpha_{\omega}^{(N)}(t) = t$,
$\alpha_\omega^{(N)}(0) = 0$, and $\alpha_\omega^{(N)}(t)$ is a continuous,
increasing and unit Lipschitz function for every $\omega \in \Omega$.
Indeed, $\vecalpha^{(N)}$ is unit Lipschitz under the $\sup$ norm metric.
\revR{Hence, the realizations of $\vecalpha^{(N)}$ lies in $\LipIncr[0,T]$,
where
\begin{align*}
\LipIncr[0,T]= 
\big\{l: [0,T] \rightarrow \mathbb{R}^{\abs{\Omega}}_+: &
\:l(0)=\veczero,\: \norm{l(t_1)-l(t_2)} \leq |t_1-t_2|, \\
&\: l_{\omega}(t_1)\geq l_{\omega}(t_2),\: \forall t_1 \geq t_2,\:
\omega\in\Omega\big\}.
\end{align*}}
By its definition, $\alpha_\omega^{(N)}(t)$ has left and right derivatives
everywhere taking values in $\{0,1\}$.
Note that $\alpha_\omega^{(N)}(t)$ increases at rate~$1$
if $\vecY^{(N)}(t)=\omega$, and~$0$ otherwise. 
Taking left derivatives along a sample path, it follows that
\[
\dot\alpha_\omega^{(N)}(t) =
\begin{cases}
\mathbbm{1}\{\vecY^{(N)}(Nt) = \omega\},
&\text{if }\vecY^{(N)}(Nt) \text{ is continuous at } Nt, \\
\mathbbm{1}\{\vecY^{(N)}(Nt^-) = \omega\}, &\text{otherwise}.
\end{cases}
\]
We may therefore rewrite~(\ref{eqn_MartX}) by substituting the left
derivatives of $\vecalpha^{(N)}$ in the intensity function (compensator),
\begin{align}
\vecX^{(N)}(t)  = &\vecX^{(N)}(0) + \sum_{\cCC} \sum_{n=0}^\infty \vece^{c,n+1}_{c,n} \frac{\lambda_c}{p_{c,N}} \int_0^t X^{(N)}_{c,n}(s) ds \nonumber \\
& + \sum_{\cCC} \sum_{n=1}^\infty \vece^{c,n-1}_{c,n} \frac{\nu_c}{p_{c,N}} \int_0^t
\sum_{\omega \in \Ominc} \dtalpo^{(N)}(s) X^{(N)}_{c,n}(s) ds +
\vecXI^{(N)}(t). \label{eqn_alphaMartX}
\end{align}

\revU{The next propostion ensures the existence of a weak limit for the
sequence of joint processes $\big(\vecX^{(N)}(t),\vecalpha^{(N)}(t)\big)$.}

\begin{proposition}
\label{Prop_XalpRerlCompact}
The sequence of joint \revU{processes}
$\big(\vecX^{(N)}(t),\vecalpha^{(N)}(t)\big) \in D_{E^1}[0,\infty) \times
L_{\uparrow}[0,\infty)$ is relatively compact.
\end{proposition}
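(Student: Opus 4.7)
The plan is to reduce joint relative compactness of $(\vecX^{(N)},\vecalpha^{(N)})$ to relative compactness of the two marginal sequences, which is legitimate since $E\times\CalpInfty$ is the product of Polish spaces and marginal tightness implies joint tightness. I would therefore treat the two coordinates separately and then combine.

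For $\vecalpha^{(N)}$, the argument is essentially deterministic. Each component $\alpha^{(N)}_\omega$ is nondecreasing, starts at $0$, is bounded by $t$, and is $1$-Lipschitz, because the pathwise derivatives $\dot\alpha^{(N)}_\omega$ take values in $\{0,1\}$ and at each time exactly one of them equals $1$. Hence the family $\{\vecalpha^{(N)}\}_N$ restricted to any $[0,T]$ is uniformly bounded and equicontinuous, so Arzel\`a--Ascoli gives relative compactness in $\CalpInfty$ under uniform convergence on compact sets.

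For $\vecX^{(N)}$ in $D_E[0,\infty)$ I would apply the standard Ethier--Kurtz criterion: compact containment plus an Aldous-type modulus of continuity. Compact containment is free because $E$ itself is compact by Lemma \ref{ECompact}. I would then start from the martingale decomposition \eqref{eqn_alphaMartX} and split the analysis into a drift part and a martingale part $\vecXI^{(N)}$. The drift coefficients $\lambda_c/p_{c,N}$ and $\nu_c/p_{c,N}$ are bounded uniformly in $N$ and $X^{(N)}_{c,n}\in[0,1]$, so the drift is coordinatewise Lipschitz in $t$ with constants independent of $N$. For the martingale part, Lemma \ref{lemma_NMart} together with Doob's $L^2$ inequality and the predictable quadratic variations of the Poisson martingales in \eqref{eqn_MartA}-\eqref{eqn_MartB} yields $\mathbb{E}[\sup_{s\le T}\Xi^{(N)}_{c,n}(s)^2]=O(1/N)$, the prefactor $1/N_c^2$ from \eqref{eqn_Marteta} beating the $O(N)$ jump rate. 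Combining these two bounds controls $\rho(\vecX^{(N)}(\tau_N+\delta_N),\vecX^{(N)}(\tau_N))\to 0$ in probability for any bounded stopping times $\tau_N$ and $\delta_N\downarrow 0$, which is the required modulus condition.

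The step I expect to be the main obstacle is bookkeeping the countable index set $\{(c,n):c\in\CC,\ n\in\NN\}$, since the modulus bound must be expressed in the weighted metric $\rho$ and not just coordinatewise. The geometric $2^{-n}$ weights inside $\rho_1$ rescue the argument: the tail $\sum_{n>n_0}2^{-n}$ is uniformly negligible, so I would truncate at a large $n_0$, apply the drift-plus-martingale estimate above on the finite block $\{(c,n):c\in\CC,\ n\le n_0\}$, and absorb the tail into the $\epsilon$-budget. Once relative compactness of both marginals is established, relative compactness of the joint sequence $(\vecX^{(N)},\vecalpha^{(N)})$ follows immediately.
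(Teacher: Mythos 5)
Your proposal is correct and follows essentially the same route as the paper: marginal tightness suffices for joint tightness, Arzel\`a--Ascoli handles $\vecalpha^{(N)}$ via the unit-Lipschitz increasing paths, and for $\vecX^{(N)}$ the compactness of $E$ together with the drift-Lipschitz plus vanishing-martingale decomposition (Doob's inequality giving the $O(1/N)$ bound) and the $2^{-n}$-weight truncation of the countable index set is exactly the paper's argument in Lemma \ref{Lem_PopAreALP}. The only cosmetic difference is that you package the modulus estimate via an Aldous-type stopping-time criterion, whereas the paper formalizes an ``asymptotically Lipschitz in probability'' property and verifies condition (b) of the Ethier--Kurtz relative-compactness theorem directly; the underlying estimates are identical.
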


Observe that, \revB{as a consequence of \cite[Exercise~6, page~41]{Billingsley68}, 
in order to prove Proposition~\ref{Prop_XalpRerlCompact} it suffices
to show the tightness for the marginals}, i.e.,
the tightness of $\vecalpha^{(N)}(t)$ and of $\vecX^{(N)}(t)$. 
\revU{The proof of the tightness of $\vecalpha^{(N)}(t)$ is immediate,
and the tightness of $\vecX^{(N)}(t)$ is a consequence of the relative
compactness of~$E$ established in Lemma~\ref{ECompact} and the
observation that the jumps happen at rate $O(N)$ and have size $O(1/N)$ 
\cite{Bramson98,Stolyar05}.}

\subsubsection{Mean-field limit characterization}
\label{Sec_Gen_FL_FL}

\revB{We just proved that the sequence of processes
$\big(\vecX^{(N)}(t),\vecalpha^{(N)}(t)\big)$ is relatively compact,
and we now derive the unique characterization
$\lb\vecx(t), \vecalpha(t)\rb$ of the limiting process of any
converging subsequence}. 
This suffices to derive a weak limit for the sequence
$\big(\vecX^{(N)}(t),\vecalpha^{(N)}(t)\big)$.
As a further step towards proving Theorem~\ref{Theo_MFGen}, we first
establish an intermediate mean-field limit, which somewhat resembles
that in \cite[Lemma 1]{HK94}).
Specifically we describe the steps needed to obtain the following result.
\begin{proposition}
\label{prop_MFA}
\revB{Consider any convergent subsequence of
\[(\vecX^{(N)}(t),\vecalpha^{(N)}(t)) \in D_E[0,\infty) \times \LipIncr[0,\infty),\] 
its limit $\big(\vecx(t),\vecalpha(t)\big)$ satisfies the differential equation}
\begin{align*}
\vecx(t) = & \vecx(0) + \sumc \frac{\lambda_c}{p_c} \sum_{n=0}^{\infty} \vece^{c,n+1}_{c,n} \int_0^t x_{c,n}(s)ds \\
& + \sumc\frac{\nu_c}{p_c} \sum_{n=1}^{\infty} \vece^{c,n-1}_{c,n} \int_0^t x_{c,n}(s)\sum_{\omega \in \Omega_{-c}}\dot\alpha_{\omega}(s)ds. 
\end{align*}
\end{proposition}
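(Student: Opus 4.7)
The plan is to pass to the limit componentwise in the martingale representation \eqref{eqn_alphaMartX}. Applying Proposition \ref{Prop_XalpRerlCompact} together with the Skorohod representation theorem, I may work along a convergent subsequence on a common probability space on which $(\vecX^{(N)}, \vecalpha^{(N)}) \to (\vecx, \vecalpha)$ almost surely in $D_E[0,\infty) \times \CalpInfty$. The jumps of $\vecX^{(N)}$ have size of order $1/N_c \to 0$, so the limit $\vecx$ is continuous, which upgrades Skorohod convergence to uniform-on-compacts convergence of each coordinate $X_{c,n}^{(N)} \to x_{c,n}$. The components $\alpha_\omega^{(N)}$ are non-decreasing and $1$-Lipschitz, so their limits $\alpha_\omega$ inherit these properties and $\alpha_\omega^{(N)} \to \alpha_\omega$ uniformly on every $[0,t]$ as well.

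For the martingale error, Lemma \ref{lemma_NMart} combined with independence of the driving Poisson processes and the bounds $\langle M_{A,c,n}\rangle_t \le N\lambda_c t$, $\langle M_{B,c,n}\rangle_t \le N\nu_c t$ yields
\[
\mathbb{E}\bigl[\Xi_{c,n}^{(N)}(t)^2\bigr] \le \frac{2N(\lambda_c+\nu_c)t}{N_c^2} = O(1/N),
\]
so $\Xi_{c,n}^{(N)}(t) \to 0$ in $L^2$ and hence a.s.\ along a further subsequence. Since $\lambda_c/p_{c,N} \to \lambda_c/p_c$ by assumption, bounded convergence handles the arrival drift: $\int_0^t X_{c,n}^{(N)}(s)\,ds \to \int_0^t x_{c,n}(s)\,ds$.

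The main obstacle is the back-off drift, which I recast as a Stieltjes integral against a nice non-decreasing function:
\[
\int_0^t \sum_{\omega \in \Omega_{-c}} \dtalpo^{(N)}(s)\, X_{c,n}^{(N)}(s)\, ds = \int_0^t X_{c,n}^{(N)}(s)\, d\beta_c^{(N)}(s),\qquad \beta_c^{(N)}(s) := \sum_{\omega \in \Omega_{-c}} \alpha_\omega^{(N)}(s).
\]
Each $\beta_c^{(N)}$ is non-decreasing, $1$-Lipschitz, and converges uniformly on $[0,t]$ to $\beta_c(s) := \sum_{\omega \in \Omega_{-c}} \alpha_\omega(s)$, so the induced measures $d\beta_c^{(N)}$ converge weakly to $d\beta_c$ on $[0,t]$. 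Splitting
\[
\int_0^t X_{c,n}^{(N)}\, d\beta_c^{(N)} = \int_0^t x_{c,n}\, d\beta_c^{(N)} + \int_0^t (X_{c,n}^{(N)} - x_{c,n})\, d\beta_c^{(N)},
\]
the second term is bounded in modulus by $t\,\sup_{s\le t}|X_{c,n}^{(N)}(s) - x_{c,n}(s)| \to 0$, while the first converges to $\int_0^t x_{c,n}(s)\, d\beta_c(s)$ by weak convergence of measures against the bounded continuous integrand $x_{c,n}$. This limit rewrites as $\int_0^t x_{c,n}(s)\sum_{\omega \in \Omega_{-c}}\dot\alpha_\omega(s)\,ds$. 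Each coordinate of \eqref{eqn_alphaMartX} involves only finitely many such pieces (the shift vectors $\vece^{c,n\pm 1}_{c,n}$ touch only neighbouring indices), so assembling the three convergences gives the proposition. The delicate point is precisely this last step: the derivatives $\dot\alpha_\omega^{(N)} \in \{0,1\}$ need not converge pointwise to $\dot\alpha_\omega$, and the argument must route through uniform convergence of the primitives together with continuity of $x_{c,n}$ rather than any direct convergence of the derivatives themselves.
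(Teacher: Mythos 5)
Your argument is correct and follows essentially the same strategy as the paper: kill the martingale remainder via the $L^2$/Doob bound, pass to the limit in the arrival drift by dominated convergence, and handle the back-off drift with the add-and-subtract decomposition that shifts the convergence burden from the non-convergent derivatives $\dot\alpha_\omega^{(N)}$ onto the uniformly convergent primitives --- exactly the delicate point you flag at the end. The only differences are in packaging: the paper routes through the Continuous Mapping Theorem plus a converging-together lemma rather than the Skorohod representation, and it proves the convergence of $\int_0^t x_{c,n}(s)\,\dot\alpha_\omega^{(N)}(s)\,ds$ by approximating the continuous limit $x_{c,n}$ with step functions instead of invoking weak convergence of the Stieltjes measures $d\beta_c^{(N)}$, which amounts to the same estimate.
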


Observe that Equation~\eqref{eqn_alphaMartX} determines $\vecX^{(N)}(t)$
as in the following sum, of an initial term plus sample paths lying
in $D_E[0,t]$:
\begin{equation}
\vecX^{(N)}(t) =
\vecX^{(N)}(0) + \vecI_A^{(N)}(t) + \vecI_B^{(N)}(t) + \vecXI^{(N)}(t),
\label{eqn_MartCts}
\end{equation}
where
\begin{eqnarray*}
\vecI_A^{(N)}(t) & \doteq & \sumc\frac{\lambda_c}{p_{c,N}} \sum_{n=0}^{\infty} \vece^{c,n+1}_{c,n} \int_0^t X_{c,n}^{(N)}(s)ds \\
\vecI_B^{(N)}(t) & \doteq & \sum_{\cCC} \frac{\nu_c}{p_{c,N}} \sum_{n=1}^\infty \vece^{c,n-1}_{c,n} \int_0^t \sum_{\omega \in \Ominc} \dtalpo^{(N)}(s)  X^{(N)}_{c,n}(s) ds.
\end{eqnarray*}
Furthermore if $\vecx(t)$, $\vecalpha(t)$ are weak limits
of the processes $\vecX^{(N)}(t)$, $\vecalpha^{(N)}(t)$, then we make corresponding
definitions for $\vecI_A(t)$, $\vecI_B(t)$.
Further define $\vecI^{(N)}(t) \doteq
\vecX^{(N)}(0) + \vecI_A^{(N)}(t) + \vecI_B^{(N)}(t)$ and let the process $\vecI$
be the corresponding limit.

In order to prove Proposition~\ref{prop_MFA}, we need to go through
the following steps:
\begin{enumerate}
\item[(a)] Show that the weak limits of $\vecX^{(N)}(t)$
and $\vecI^{(N)}(t)$ coincide. 
\item[(b)] Derive the weak limit of $\vecI^{(N)}(t)$.
\end{enumerate}

For step~(a), we apply the Continuous Mapping Theorem,
see \cite[Theorem 5.1, page 30]{Billingsley68} to the weak limits
for the terms on the right hand side of~\eqref{eqn_MartCts} as follows.
First recall \cite[Theorem 3.1, page 27]{Billingsley99} which states 
that if $\big(\vecI^{(N)},\vecX^{(N)}\big)$ are random elements
of $D_E[0,\infty) \times D_E[0,\infty)$  (defined on some common
probability space) and taking values in \revB{$(E,\rho)$}, then if 
$\vecI^{(N)} \Rightarrow \vecI$ and it holds that
$d(\vecI^{(N)},\vecX^{(N)}) \Rightarrow 0$, it follows that
$\vecX^{(N)} \Rightarrow \vecI$.
To apply the above result to~\eqref{eqn_MartCts}, we will take $d$
to be the Skorohod metric on $D_E[0,\infty)$ as defined
in \cite[Section 3.5]{EK86}.
In order to show that $\vecX^{(N)}$ and $\vecI^{(N)}$ have the same
weak limit, it will be enough to establish the following lemma
which is proved in Appendix~\ref{App_prop_dZ}.

\begin{lemma}
\label{prop_dZ}
Given  any $\eta > 0$ and $T > 0$,
\[
\lim_{N\rightarrow\infty} \mathbb{P}\Big\{\sup_{t \in [0,T]}
\rho(\vecXI^{(N)}(t), \boldsymbol{0}) > \eta \Big\} = 0.
\]
\end{lemma}

\noindent
It can be shown that Lemma~\ref{prop_dZ} implies that
$d(\vecX^{(N)},\vecI^{(N)}) \Rightarrow 0$ \revT{by leveraging the
arguments in \cite[Section 5, p.117]{EK86}.}

We now continue with step~(b).
Observe that $\vecI^{(N)}(t)$ is a sequence of continuous paths.
We will establish convergence of this latter sequence in $C_E[0,\infty)$
under the \revU{local} uniform metric using the Continuous Mapping Theorem.
This establishes convergence in~$D$ (the Skorohod topology relativized
to~$C$ coincides with the \revU{local} uniform topology).

Given a subsequence of processes
$(\vecX^{(N_m)},\vecalpha^{(N_m)}) \Rightarrow (\vecx,\vecalpha)$
(without loss of generality we will take this subsequence to be the
whole sequence), we obtain the corresponding weak limit of $\vecI^{(N)}$
provided that the given mapping is continuous. 
We now prove the continuity of such mapping. 
\revR{Consider a sequence
\[
\lb{\hat{\vecX}}^{(N)}(t),\hat{\vecalpha}^{(N)}(t)\rb \rightarrow
\lb\hat{\vecx}(t), \hat{\vecalpha}(t)\rb
\]
in the product topology. Then, for all $t > 0$, we have that
\begin{equation}
\int_0^t \hat{X}^{(N)}_{c,n}(s)ds \rightarrow \int_0^t \hat x_{c,n}(s) ds,
\label{eqn_xIntegral}
\end{equation}
and
\begin{equation}
\sum_{\omega \in \Ominc} \int_0^t \hat X^{(N)}_{c,n}(s) \dot{\hat{\alpha}}^{(N)}_\omega(s) ds \rightarrow
\sum_{\omega \in \Ominc} \int_0^t \hat x_{c,n}(s) \dot{\hat{\alpha}}_\omega(s) ds.
\label{eqn_alphaxIntegral}
\end{equation}
The relations above hold since the integral operator is continuous
and $\dot{\hat{\alpha}}_\omega(t) \leq 1$ for every $t \geq 0$.}

Note that $\hat X^{(N)}_{c,n}(s) \in D_{[0,1]}[0,\infty)$ is Lebesgue
integrable over any finite interval and so the first integral is
well-defined.
Since the processes $\hat \alpha^{(N)}_\omega$ are unit Lipschitz and increasing,
the integrals in~\eqref{eqn_alphaxIntegral} also exist.
\revR{Equations~\eqref{eqn_xIntegral} and~\eqref{eqn_alphaxIntegral}
only establish pointwise convergence. 
However, given a sequence $\{u^{(N)}(t)\}_N$ of nondecreasing 
Lipschitz continuous functions with uniformly bounded Lipschitz
constant that converges pointwise to a limiting Lipschitz function $u(t))$,
it can be shown that the convergence holds uniformly as well
due to the triangle inequality and the common Lipschitz constant, i.e.,
for every $\epsilon > 0$ there exists $N_\epsilon$ such that
$\sup_{t \in [0,T]} \abs{u^{(N)}(t) - u(t)} < \epsilon$ for every
$N \geq N_\epsilon$.}

\revB{Since the various components are bounded, due to the Weierstrass
M-test \cite[Theorem 7.10, p.~148]{Rudin76}, the sum of the components
over~$n$ and~$c$ converges to the sum of the limiting components.}
Hence, it holds that
\begin{align*}
\sup_{t \in [0,T]} \rho( I^{(N)}_{A}(t), I_{A}(t) ) \rightarrow 0, \qquad
\sup_{t \in [0,T]} \rho( I^{(N)}_{B}(t), I_{B}(t) ) \rightarrow 0. 
\end{align*}
\revB{We just proved that the mapping is continuous and thus,
due to the Continuous Mapping Theorem, we deduce that
$ \vecI^{(N)} \Rightarrow \vecI $ as required.}
\revB{Since $T$ is arbitrary this implies convergence in $C_E[0,\infty)$
due to \cite[Lemma 3, p.173]{Billingsley99}, and hence in $D_E[0,\infty)$.}


\subsubsection{Weak limit characterization of $\vecalpha$}
\label{Sec_Gen_FL_WeakLimalpha}

\noindent 
Proposition~\ref{prop_MFA} determines any weak limit~$\vecx$ as the
solution to a differential equation for which the corresponding limit
occupancy measure~$\vecalpha$ is given.
We now proceed to characterize this latter limit in terms of the
stationary measure of the activity process.

Recall the properties satisfied by the processes $\alpha_\omega^{(N)}$.
In particular, for every $t \geq 0$,
\begin{equation}
\label{RelationCumOccAct}
\alpha_\omega^{(N)}(t)\geq 0, \qquad
\sum_{\omega \in \Omega} \alpha_\omega^{(N)} = t, \qquad
\sum_{\omega \in \Omega} \dot\alpha_\omega^{(N)}(t) = 1. 
 \end{equation}
We begin by substituting $\balp^{(N)}$ into~\eqref{eqn_MartY}
and obtain that $\psi_c(t)$ equals
\begin{align*}
Y_c^{(N)}(Nt) - Y_c^{(N)}(0) +  N \mu_c \sum_{\omega \in \Opluc} \int_0^t \dtalp_\omega^{(N)}(s) ds \\
- N \nu_c \sum_{\omega \in \Ominc}\int_0^t \dtalp_\omega^{(N)}(s) \lb 1 - X^{(N)}_{c,0}(s) \rb ds,    
\end{align*}
which therefore is a martingale.
Dividing by~$N$ and taking weak limits,
we obtain that 
\begin{align}
&\int_0^t \nu_c (1-X_{c,0}^{(N)}(s)) \sum_{\omega \in \Omega_{-c}}
\dot\alpha_\omega^{(N)}(s)ds - \int_0^t \mu_c \sum_{\omega \in \Omega_{+c}}
\dot\alpha_\omega^{(N)}(s)ds \Rightarrow 0, \label{YmartingalePrelimit}
\end{align}
for every $t \geq 0$,
since $\psi_c/N \Rightarrow 0$ due to Doob's inequality.

We apply the Continuous Mapping Theorem to~\eqref{YmartingalePrelimit}
as explained in the discussion before Lemma~\ref{prop_dZ}, 
and use the limits established in Equations~\eqref{eqn_xIntegral}
and~\eqref{eqn_alphaxIntegral}, to obtain that for every $t \geq 0$
\begin{equation}
0 = \int_0^t \nu_c (1-x_{c,0}(s))
\sum_{\omega \in \Omega_{-c}} \dtalp_\omega(s) ds -
\int_0^t \mu_c \sum_{\omega \in \Omega_{+c}} \dtalp_\omega(s) ds.
\label{FLCumulativeTransActivProc}
\end{equation}

This leads to the following corollary proved
in Appendix~\ref{proofalphacorollary}.

\begin{corollary}
\label{alphacorollary}
The function $\alpha_{\omega}(t) \in C[0,\infty)$ is differentiable
almost everywhere and
\begin{equation}
\dtalp_{\omega}(t) = \pi_{\vecx_0(t)}(\omega).
\label{eqn_alphaCharacterization2}
\end{equation}
\end{corollary}

Having characterized the weak limit of the process $\balp$, we obtain
Theorem~\ref{Theo_MFGen} by substituting~\eqref{eqn_alphaCharacterization2}
in Proposition~\ref{prop_MFA}.
  
\subsection{Analysis of the mean-field limit}
\label{Sec_Gen_ExisUniq}

In the previous subsection we proved Theorem~\ref{Theo_MFGen},
establishing the convergence of $\vecX^{(N)}(t)$
to $\vecx(t) \in C_{E}[0,\infty)$, the solution of the initial-value
problem~\eqref{eqn_MFDiffEqnGen}.
In this subsection \revR{we show that $\vecx(t)$ takes values in $E^1$}
and we present the proof of Theorem~\ref{Theo_EquilibChar}.
Specifically, we show that a solution to~\eqref{eqn_MFDiffEqnGen}
exists and is unique.
Moreover, we provide a general condition yielding the existence
of a unique \revT{fixed} point~$\vecx^*$. \\ 
 
\noindent
{\it Transient behavior}. 
\revR{In Appendix~\ref{App_LipContH} we prove that the function
$H(t,\vecx(t))$ defined in~\eqref{eqn_MFDiffEqnGen} is Lipschitz
continuous in $\vecx(t)$, and consequently in~$t$ as well
since it depends on time only through~$\vecx(t)$. 
Thanks to the results in~\cite{DP86}, we have that even if $E$ is
infinite-dimensional, the Lipschitz continuity yields that a solution
$\vecx(t) \in C_E[0,\infty)$ of~\eqref{eqn_MFDiffEqnGen} exists
and is unique. 
Given a solution $\vecx(t)$, it holds that
\[
\frac{\partial}{\partial t} \sum_{n = 0}^{\infty} x_{c,n}(t) =
\sum_{n = 0}^{\infty} \frac{\partial}{\partial t}  x_{c,n}(t) =
\sum_{n = 0}^{\infty} H_{c,n}(\vecx(t)) = 0,
\]
where summation and derivatives can be interchanged since
$\sum_{n = 0}^{\infty} \frac{\partial}{\partial t}  x_{c,n}(t)$
converges uniformly \cite[Theorem 7.17]{Rudin76}.
Hence, the set $E^1$ is positive invariant for the initial-value
problem~\eqref{eqn_MFDiffEqnGen} and given an arbitrary $\vecx(0)\in E^1$,
the solution $\vecx(t)$ remains in~$E^1$ for every $t \geq 0$.} \\

\noindent
{\it Limiting behavior}.
In order to prove Theorem~\ref{Theo_EquilibChar}, we observe that when
a \revT{fixed} point~$\vecx^*$ exists, it must satisfy the relation
$H(\vecx^*) = \veczero$ component-wise, i.e., for every $\cCC$
\[
\lambda_c (x_{c,n-1}^* - x_{c,n}^*) +
\nu_c \pi_{\vecx_0^*}(\Omega_{-c}) (x_{c,n+1}^* - x_{c,n}^*) = 0,
\]
for all $n > 0$, and
\[
- \lambda_c x_{c,0}^* + \nu_c \pi_{\vecx_0^*}(\Omega_{-c}) x_{c,1}^* = 0.
\]
Thus
\[
\lambda_c x_{c,n}^* = \nu_c \pi_{\vecx_0^*}(\Omega_{-c}) x_{c,n+1}^*,
\qquad \forall\: n \in \NatZero,
\]
yielding
\begin{equation}
\label{eqn_FormulaXi}
x_{c,n}^* = \xi_c^n x_{c,0}^*, \qquad 
\xi_c = \frac{\lambda_c}{\nu_c \pi_{\vecx_0^*}(\Omega_{-c})}
\end{equation}
for every $n \in \NatZero$.
In other words, any \revT{fixed} point $\vecx^*$ must satisfy the
above geometric relation.
In addition, such a \revT{fixed} point $\vecx^*$ lies in~$E^1$
if and only if $\xi_c = 1 - x_{c,0}^* < 1$ for every $\cCC$, i.e.,
$\vecx_0^* > \veczero$.
Hence, thanks to~\eqref{eqn_FormulaXi}, the uniqueness of~$\vecxi$
yields the same for~$\vecx^*$. 

According to the product-form solution in~\eqref{Eqn_piDef} and the
definition of throughput~\eqref{Eqn_throuDef}, it holds that
\[
\theta_c(\vecxi) = \pi_{\vecx_0^*}(\Omega_{+c}) =
\sum_{\omega \in \Omega_{+c}} \pi(\omega;\vecxi).
\]
In order for $\vecx^*$ to be a \revT{fixed} point, $\vecxi$ must solve
$\theta_c(\vecxi) =  \rho_c$ for every $\cCC$, i.e., the fraction of time
that class~$c$ is active in stationarity must be equal to its load.
Noting that $\vecrho \in \mbox{int}(\Gamma)$, the global invertibility
of the throughput map~\eqref{Eqn_InverseDef} implies that $\vecxi$ has
to satisfy $\vecxi = \veceta(\vecrho)$.
Observe that the existence and uniqueness of~$\vecxi$ follows from the
global invertibility property of the map $\vectheta(\cdot)$.
We conclude that $\vecx^*$ as in~\eqref{EquilibriumRelation} is the
unique \revT{fixed} point of the initial-value
problem~\eqref{eqn_MFDiffEqnGen}, and that $\vecx^* \in E^1$
if and only if $\veceta(\vecrho) < \vece$.
(As a side-remark, we mention that the initial-value problem would not
have any \revT{fixed} point, if the assumption
$\vecrho \in \mbox{int}(\Gamma)$ were not satisfied.
This makes sense since the latter condition is necessary for the queue
lengths to be stable.)

As an example, let us revisit the square network displayed
in Figure~\ref{fig_SquareGraph} and assume that 
\[
\revU{\max\{\rho_1,\rho_3\} + \max\{\rho_2,\rho_4\} < 1},
\]
i.e., $\vecrho \in \mbox{int}(\Gamma)$.
In order to identify the unique \revT{fixed} point of the initial-value 
problem~\eqref{eqn_MFDiffEqnGen}, the following system of equations has
to be solved:
\begin{equation}
\label{eqn_SquareNetworkXiFormula}
\xi_c = \frac{\lambda_c}{\nu_c \pi_{\vecx_0^*}(\Omega_{-c})}, \qquad c=1,2,3,4, 
\end{equation}
where
\begin{align*}
&\pi_{\vecx_0^*}(\Omega_{-1}) = \frac{1}{Z(\vecxi)}\big(1 + \sigma_3 \xi_3\big), \qquad\pi_{\vecx_0^*}(\Omega_{-2}) = \frac{1}{Z(\vecxi)}\big(1 + \sigma_4 \xi_4\big), \\
&\pi_{\vecx_0^*}(\Omega_{-3}) = \frac{1}{Z(\vecxi)}\big(1 + \sigma_4 \xi_4\big), \qquad\pi_{\vecx_0^*}(\Omega_{-4}) = \frac{1}{Z(\vecxi)}\big(1 + \sigma_3 \xi_3\big),
\end{align*}
and
\[
Z(\vecxi) = 1 + \sum_{c=1}^{4} \sigma_c \xi_c +
\sigma_1\sigma_3\xi_1\xi_3 + \sigma_2\sigma_4\xi_2\xi_4.
\]
The \revT{fixed} point exists if and only if $\vecxi < \vece$. \\

In this subsection, we proved the convergence of the population process
towards the solution of a deterministic initial-value problem,
and established necessary and sufficient conditions for the existence
and uniqueness of a \revT{fixed} point.
In the following subsection we aim to exploit the \revT{fixed} point
so as to obtain an asymptotically exact approximation for the
stationary performance measures of the system, with a focus on the
case of a complete interference graph.

\section{The complete interference graph case}
\label{Sec_Compl}

\revU{In this section we focus on the analysis of key performance
measures such as the stationary queue length, waiting-time, 
and sojourn-time distributions.
In particular, we will exploit the \revT{fixed} point~$\vecx^*$ 
derived in Theorem~\ref{Theo_EquilibChar} to obtain approximations
that are asymptotically exact in the mean-field regime for the case
of a complete interference graph.
Simulation experiments that will be presented in Section~\ref{Sec_Num} suggest that the asymptotic
results are valid for general interference graphs as well.}

Specifically, we will prove Theorem~\ref{Theo_Comp_IL}, i.e., we will
show that for any initial state $\vecx^\infty \in E^1$ with finite 
mass the solution $\vecx(t)$ of the initial value problem satisfies
$\vecx(t) \to \vecx^*$ as $t \to \infty$, and that limits interchange
so as to obtain
\begin{equation}
\label{eqn_Com_Objective}
\lim_{N\rightarrow\infty} \lim_{t\rightarrow\infty}
\mathbb{P}\{\rho(\vecX^{(N)}(t),\vecx^*)>\epsilon\} = 0, \quad
\forall \: \epsilon>0. 
\end{equation}
Due to the class symmetry, relation~\eqref{eqn_Com_Objective} implies that
\[
\lim_{N\rightarrow\infty} \lim_{t\rightarrow\infty}
\mathbb{P}\{Q^{(N)}_{c,k}(t) = n\} = x^*_{c,n}, \quad
\forall \: \cCC, k = 1,\ldots,N_c.
\]
In order to establish~\eqref{eqn_Com_Objective}, we need several steps.
\begin{itemize}
\item In Section~\ref{Sec_Compl_GS} \revT{we will show that for any
initial state $\vecx^\infty \in E^1$ (subject to a finite-mass
condition that will be described later) the solution $\vecx(t)$ of the
IVP~\eqref{eqn_MFDiffEqnGen_com} satisfies $\vecx(t) \to \vecx^*$
as $t \to \infty$, which implies}
\begin{equation}
\lim_{t\rightarrow\infty} \lim_{N\rightarrow\infty}
\mathbb{P}\{\rho(\vecX^{(N)}(t),\vecx^*)>\epsilon\} = 0, \quad
\forall \: \epsilon>0.
\label{eqn_GlobalStability}
\end{equation}
\item Equation~\eqref{eqn_Com_Objective} \revU{implicitly assumes} that
the population process in the finite system has a stationary distribution,
hence conditions for the positive recurrence of the prelimit processes
are established in Section~\ref{Sec_Compl_PR}.
\item \revT{In Section \ref{Sec_Compl_TightIL} we finally prove
Theorem~\ref{Theo_Comp_IL} and specifically~\eqref{eqn_Com_Objective},
i.e., we show that the limits in~\eqref{eqn_GlobalStability} may be
interchanged as sketched in Figure~\ref{FigSquare}.}
\end{itemize}

\revU{Throughout this section} we assume the interference graph~$G$
to be complete and condition~\eqref{eqn_Gen_Assum} to hold.
For a complete interference graph, it holds that
\[
\theta_c(\vecxi) = \pi(\vece_c;\vecxi) =
\frac{\xi_c\sigma_c}{1 + \sum_{d \in \CC}\xi_d\sigma_d}.
\]
Thus the \revT{fixed} point~$\vecx^*$ has the following closed-form
expression, and \eqref{eqn_Gen_Assum} is indeed necessary
and sufficient so as to guarantee $\vecxi < \vece$.

\begin{corollary}
\label{Theo_EquilibChar_comp}
The unique \revT{fixed} point in~$E^1$ of the 
initial-value problem~\eqref{eqn_MFDiffEqnGen} is given
by $\vecx^* \in E^1$, where
\begin{equation}
x_{c,n}^* = (1 - \xi_c) \xi_c^n, \qquad 
\xi_c =
\frac{\lambda_c}{\nu_c\big(1 - \sum_{d\in\CC}\frac{\lambda_d}{\mu_d}\big)}.
\label{eqn_EquilibriumRelation_comp}
\end{equation}
\end{corollary}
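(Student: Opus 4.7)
The plan is to specialize Theorem \ref{Theo_EquilibChar} to the complete-interference-graph case and then solve the resulting throughput inversion $\vecxi = \veceta(\vecrho)$ in closed form. First I would note that for a complete interference graph the set of independent sets consists of the empty set and the singletons, so $\Omega = \{\veczero, \vece_1, \ldots, \vece_C\}$. Consequently the partition function from \eqref{Eqn_piDef} simplifies to
\[
Z(\vecxi) = 1 + \sum_{d \in \CC} \xi_d \sigma_d,
\]
and the throughput map \eqref{Eqn_throuDef} reduces to the single-term expression
\[
\theta_c(\vecxi) = \pi(\vece_c; \vecxi) = \frac{\xi_c \sigma_c}{1 + \sum_{d \in \CC} \xi_d \sigma_d},
\]
which is precisely the formula stated just before the corollary.

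Next I would invoke Theorem \ref{Theo_EquilibChar}, which guarantees (under $\vecrho \in \mathrm{int}(\Gamma)$ and $\vecxi < \vece$) that the unique equilibrium point in $E^1$ is the geometric profile $x_{c,n}^* = (1-\xi_c)\xi_c^n$ with $\vecxi = \veceta(\vecrho)$. Thus the task reduces to solving $\theta_c(\vecxi) = \rho_c = \lambda_c/\mu_c$ for each $c \in \CC$. Substituting the simplified throughput expression yields
\[
\xi_c \nu_c = \lambda_c \Bigl(1 + \sum_{d \in \CC} \xi_d \sigma_d\Bigr), \qquad c \in \CC.
\]
Setting $T := \sum_{d \in \CC} \xi_d \sigma_d = \sum_{d \in \CC} \xi_d \nu_d/\mu_d$ and summing the identity $\xi_c \nu_c/\mu_c = (\lambda_c/\mu_c)(1+T)$ over $c$ gives $T = (1+T)\sum_{d \in \CC}\lambda_d/\mu_d$, so that
\[
1 + T = \frac{1}{1 - \sum_{d \in \CC} \lambda_d/\mu_d},
\]
which is well-defined precisely when $\sum_d \lambda_d/\mu_d < 1$, i.e.\ when $\vecrho \in \mathrm{int}(\Gamma)$. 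Feeding $1+T$ back into $\xi_c \nu_c = \lambda_c(1+T)$ yields the announced closed form
\[
\xi_c = \frac{\lambda_c}{\nu_c\bigl(1 - \sum_{d \in \CC} \lambda_d/\mu_d\bigr)}.
\]

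Finally I would verify that the hypotheses of Theorem \ref{Theo_EquilibChar} are equivalent to assumption \eqref{eqn_Gen_Assum}. The condition $\xi_c < 1$ for every $c$ is, after dividing through, exactly $\lambda_c/\nu_c < 1 - \sum_d \lambda_d/\mu_d$, whose uniform version over $c$ is \eqref{eqn_Gen_Assum}; in particular the right-hand side must be strictly positive, which recovers $\vecrho \in \mathrm{int}(\Gamma)$. There is no real obstacle here beyond bookkeeping: the only subtle point is making sure one invokes the global invertibility of $\vectheta$ (via Theorem \ref{Theo_EquilibChar}) rather than arguing uniqueness afresh, since the closed-form derivation above only exhibits a candidate solution and the uniqueness is inherited from the earlier theorem.
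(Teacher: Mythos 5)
Your proposal is correct and follows essentially the same route as the paper: specialize the throughput map to the complete graph (where $\Omega$ consists of the empty set and singletons), invoke Theorem \ref{Theo_EquilibChar} for existence, uniqueness and the geometric form, and solve $\theta_c(\vecxi)=\rho_c$ in closed form; the paper merely states the simplified $\theta_c$ and asserts the result, whereas you carry out the (correct) summation argument explicitly. Your closing remark that uniqueness is inherited from the global invertibility of $\vectheta$ rather than re-derived is exactly the right reading of the paper's logic.
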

 
\subsection{Global stability of $\vecx^*$}
\label{Sec_Compl_GS}

We begin by observing that, when the interference graph is complete,
Theorem~\ref{Theo_MFGen} specializes as follows.

\begin{corollary}
\label{Thm_MFGen_comp}
Assume $G = (\mathcal{C},\mathcal{E})$ to be complete,
$\vecX^{(N)}(0) \xrightarrow{N\rightarrow\infty} \vecx^{\infty} \in E^1$
and $\lim_{N\rightarrow\infty} p_{c,N} = p_c >0$ for every
$c\in\mathcal{C}$.
Then the sequence of processes
$(\vecX^{(N)}(t))_{N \geq 1} \in D_{E^1}[0,\infty)$ has a continuous
limit $\vecx(t) \in C_{E^1}[0,\infty)$ which is determined by the
unique solution of the initial-value problem
\begin{equation}
\frac{d \vecx(t)}{dt} = H(\vecx(t)), \qquad \vecx(0) = \vecx^{\infty}, 
\label{eqn_MFDiffEqnGen_com}
\end{equation}
where the function $H(\cdot)$ is defined by
\[
H(\vecx) = \sumc \frac{1}{p_c}
\Big(\lambda_c \sum_{n=0}^{\infty} x_{c,n}\vece^{c,n+1}_{c,n} +
\pi_{\vecx_0}^b \nu_c \sum_{n=1}^{\infty} x_{c,n}\vece^{c,n-1}_{c,n}\Big),
\]
and $\pi_{\vecx_0}^b =
\frac{1}{1 + \sum_{c\in \CC}\frac{\nu_c}{\mu_c}(1 - x_{c,0})}$.
\end{corollary}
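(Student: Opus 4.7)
The plan is to obtain Corollary \ref{Thm_MFGen_comp} as a direct specialization of Theorem \ref{Theo_MFGen}, so there is no new limiting machinery to set up; the work is purely algebraic, consisting of rewriting the generic drift $H(\cdot)$ when the interference graph is complete. Once this is done, the restriction of the state space from $E$ to $E^1$ is checked from the structure of $H$ together with the assumption $\vecx^\infty \in E^1$.

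First, I would describe the combinatorial structure of $\Omega$ under completeness. Since every two classes interfere, at most one class can be active in any feasible state, so
\[
\Omega \:=\: \{\veczero\} \cup \{\vece_c : c \in \CC\}.
\]
For a class $c$ to back off, the class activity state must leave all neighbors (hence all classes) silent, so $\Ominc = \{\veczero\}$ for every $c$. Thus $\pi_{\vecx_0}(\Ominc) = \pi_{\vecx_0}(\veczero)$ and the value is the same for every class.

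Next, I would evaluate $\pi_{\vecx_0}(\veczero)$ using the product-form \eqref{Eqn_piDef} with $\veceta = \vece - \vecx_0$. On the above reduced $\Omega$, with $\sigma_c = \nu_c/\mu_c$, we have $Z(\veczero;\vece-\vecx_0) = 1$ and $Z(\vece_c;\vece-\vecx_0) = \sigma_c(1-x_{c,0})$, so
\[
Z(\vece-\vecx_0) \:=\: 1 + \sum_{\cCC} \frac{\nu_c}{\mu_c}(1-x_{c,0}),
\qquad
\pi_{\vecx_0}(\Ominc) \:=\: \frac{1}{Z(\vece-\vecx_0)} \:=\: \pi_{\vecx_0}^b.
\]
Substituting this $c$-independent factor into the generic drift of Theorem \ref{Theo_MFGen} produces exactly the function $H(\cdot)$ stated in Corollary \ref{Thm_MFGen_comp}.

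Finally, I would justify the refinement of the state space to $E^1$. The process $\vecX^{(N)}(Nt)$ always lies in $E^1$ by construction, since the scaled counts of class-$c$ queues sum to one. For the limit, each transition vector $\vece^{c,n\pm1}_{c,n}$ appearing in $H$ has vanishing total mass in the $c$-th block, so $\frac{d}{dt}\sum_{n\geq 0} x_{c,n}(t) = 0$ for every $\cCC$. Combined with $\vecx(0) = \vecx^\infty \in E^1$, this gives $\vecx(t) \in E^1$ for all $t \geq 0$; in particular the limit paths lie in $C_{E^1}[0,\infty)$. Existence and uniqueness of the solution of the initial-value problem carry over from Theorem \ref{Theo_MFGen}, since the Lipschitz argument invoked there (alluded to in the transient behavior remark after Theorem \ref{Theo_EquilibChar}) applies verbatim to the simpler drift above. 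I do not foresee a substantive obstacle: the only point that requires some care is the explicit enumeration of $\Omega$ and $\Ominc$ in the complete case, and then checking that the normalizing constant collapses to the stated $\pi_{\vecx_0}^b$.
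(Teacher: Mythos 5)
Your proposal is correct and matches the paper's treatment: the paper states this corollary as an immediate specialization of Theorem \ref{Theo_MFGen}, and your enumeration of $\Omega=\{\veczero\}\cup\{\vece_c\}_{\cCC}$, the identification $\Ominc=\{\veczero\}$, and the resulting collapse of the normalizing constant to $\pi^b_{\vecx_0}$ are exactly the (unwritten) computation behind it. Your check that $E^1$ is positive invariant likewise mirrors the paper's own remark in the transient-behavior discussion of Section \ref{Sec_Gen_ExisUniq}.
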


A critical role in establishing~\ref{eqn_GlobalStability} is played
by the notion of the {\it mass} of a population vector, defined as
the average number of packets in the buffer of the various nodes.
It can be shown that for any initial state $\vecx^\infty \in E^1$
with finite mass the solution $\vecx(t)$ of the initial-value problem
satisfies $\vecx(t) \to \vecx^*$ as $t \to \infty$,
which implies~\eqref{eqn_GlobalStability}, i.e., the fixed point
$\vecx^* \in E^1$ as derived in Corollary~\ref{Theo_EquilibChar_comp}
is a stable equilibrium point of the initial-value
problem~\eqref{eqn_MFDiffEqnGen_com}.

\revT{
A classic result (see for instance~\cite[p.~1846 {\it seq}]{Whitt85},
but probably dating back to a few decades before that), states that
the above property is sufficient to establish that
$\delta_{\vecx^*} \in \mathbb{P}(E^1)$, the probability measure
concentrated in~$\vecx^*$, is the unique {\it invariant distribution} 
for~\eqref{eqn_MFDiffEqnGen_com}.
A distribution $\vecd$ is invariant for an initial-value problem if, 
given that the initial condition is distributed according to~$\vecd$,
the solution at time~$t$ is distributed according to~$\vecd$ as well
for every $t \geq 0$.}

\revU{We only briefly outline the steps necessary in order to
prove~\eqref{eqn_GlobalStability}, and refer to~\cite{FabioThesis18}
for detailed proofs.
\begin{itemize}
\item The first step is to show that the mass of the \revT{fixed}
point~$\vecx^*$ is finite and the mass of a solution $\vecx(t)$
of~\eqref{eqn_MFDiffEqnGen_com} is a Lipschitz continuous function in~$t$. 
\item The second step is to establish an invariant property for the
initial-value problem~\eqref{eqn_MFDiffEqnGen_com}.
This property allows us to couple different solutions
of~\eqref{eqn_MFDiffEqnGen_com}, and plays a key role in the proof
of~\eqref{eqn_GlobalStability}.
\item The final step then leverages the above two steps so as to show
that, if $\vecx(0)$ has finite mass, $\vecx(t) \rightarrow \vecx^*$.
The \revT{fixed} point $\vecx^*$ is thus a {\it globally stable}
equilibrium point. 
\end{itemize}

It is worth observing that the invariance property in the second step
has only been established for the complete interference graph,
and does not extend to arbitrary interference graphs.
Note however that the invariance property is only a convenient proof
technique used in arguing global stability, and not a necessary
condition for global stability to hold.
}

\subsection{Positive recurrence}
\label{Sec_Compl_PR}

As mentioned earlier, the final objective of this section is to obtain
an approximation for the stationary distribution of the queue length
processes at the various nodes in large systems. 
\revT{In particular, we will show that, when $N$ is large,
$\vecx^*\in E^1$ provides an approximation for the random variable
$\vecX_*^{(N)}$ whose law is given by the stationary distribution
$\pi_{\vecX_*^{(N)}}$ of the population process system with $N$~nodes.
First, however, it must be ensured that the population process has
a stationary distribution, i.e., that the random variable
$\vecX_*^{(N)}$ exists.
In this section, we will establish the positive recurrence of the
queue length process $\vecQ^{(N)}(t)$ under condition~\eqref{eqn_Gen_Assum},
for any $N \geq 1$.
Then, the positive recurrence of $\vecX^{(N)}(t)$ immediately follows
by construction.}

The complete interference graph assumption \revU{continues to play}
an important role in this analysis.
Indeed, since at every moment in time at most one node can be active,
the CSMA model can be described as a single-server polling system
with random routing.
Specifically, the switchover periods of the server correspond to the 
idle-state of the activity process, i.e., the intervals in which each
node is allowed to back-off. 
When a node completes its back-off period, the server visits that 
node and serves a packet, when present.
In particular, consider the polling system with the following features: 
\begin{itemize}
\item Single server and $N$~nodes.
The nodes are partitioned in $N_1^{(N)} + \ldots + N_C^{(N)}$ nodes
 with class-dependent features;
\item Packets arrive at the $k$-th node belonging to class~$c$ as
a Poisson process of rate $\lambda_{c}^{(N)}$.
\item The service policy is $1$-limited.
In particular, a single packet is served if present, otherwise 
the server leaves the node immediately;
\item When a class-$c$ node is visited and its buffer is nonempty,
a packet is served for an exponentially distributed time
with parameter~$\mu_c^{(N)}$;
\item The switchover time of the server between two consecutive visits
is exponentially distributed with parameter
\[
\nu \:\dot=\: \sumc N_c^{(N)} \nu_c^{(N)} = \sumc \nu_c;
\]
\item At the end of a switchover period, the probability for the
server to pick the $k$-th node belonging to class~$c$ is equal
to $p_{c,k} = \nu_c^{(N)}/\nu$.
\end{itemize}
Observe that the above-described polling system behaves exactly as the
CSMA model with complete interference and that the latter property is
crucial in establishing this connection.

Polling systems have been extensively studied in the literature.
In particular, the condition for the positive recurrence of the
above-described model has already been investigated. 
In \cite{DM95,Down98,FJ98}, the authors provide positive recurrence
conditions for general classes of polling systems via the fluid limit
analysis of the queue length processes \cite{DM95,Down98} and via
a direct approach~\cite{FJ98}. 
In the following proposition we recall a special case of the result
in~\cite[Theorem 2.3]{Down98} with the notation adapted to the CSMA
setting.

\begin{proposition}[Theorem 2.3 \cite{Down98}]
\label{Prop_Compl_PR}
Define
\[
\varrho \:\dot=\: \sum_{d \in \CC} \frac{\lambda_d}{\mu_d} +
\max_{\cCC} \frac{\lambda_c}{\nu_c}.
\]
\begin{enumerate}
\item[(i)] If $\varrho < 1$, the queue length process $\vecQ^{(N)}$ is
positive recurrent for every~$N$.
\item[(ii)] If $\varrho > 1$, the queue length process $\vecQ^{(N)}$
is transient for every~$N$.
\end{enumerate}
\end{proposition}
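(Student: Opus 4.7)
The plan is to transfer the stability result from \cite{Down98} via the polling-system equivalence introduced just before the proposition, by verifying that the abstract stability threshold of Down specializes to $\varrho$ under our parametrization. This approach is natural because the hard probabilistic work (fluid-limit stability of 1-limited polling with random routing) has already been done in \cite{Down98}; what remains is a translation of notation and a check that our CSMA model falls within the scope of that result.

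First I would formalize the pathwise equivalence between the CSMA Markov process $(\vecQ^{(N)}(t), \vecY^{(N)}(t))$ and the 1-limited polling system with $N$ nodes, per-node Poisson arrivals of rate $\lambda_c^{(N)} = \lambda_c/N_c$, exponential services of rate $\mu_c^{(N)} = \mu_c$, exponential switchover of total rate $\nu = \sumc \nu_c$, and random polling probabilities $p_{c,k} = \nu_c^{(N)}/\nu$. Under the complete interference graph assumption at most one node is ever active, so $\vecY^{(N)}$ induces a well-defined server position: either a unique serving node, or ``in switchover'' when $\vecY^{(N)} = 0$. Within an idle period, the race among the $\nu_c^{(N)}$-rate back-off exponentials of all nodes is equivalent in distribution to drawing a switchover time of rate $\nu$ and then selecting a target node with probability $p_{c,k}$; if the chosen node happens to be empty, the server re-enters switchover immediately, which reproduces the CSMA dynamics in which empty nodes never win. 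A standard generator comparison then shows that the two Markov processes have identical joint law on $\vecQ^{(N)}$.

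Next I would invoke \cite[Theorem 2.3]{Down98}, which states that such a 1-limited polling system is positive recurrent when $\sum_i \rho_i + \max_i \lambda_i/(\nu p_i) < 1$ and transient when the reverse strict inequality holds, with the indices $i$ running over all $N$ individual nodes. Substituting the CSMA parameters, the total service load is $\sumc \sum_{k=1}^{N_c} \lambda_c^{(N)}/\mu_c^{(N)} = \sumc \lambda_c/\mu_c$, while the per-node arrival-to-visit ratio simplifies via $\nu p_{c,k} = \nu_c^{(N)}$ to $\lambda_c^{(N)}/(\nu p_{c,k}) = \lambda_c/\nu_c$, independent of $k$. By class symmetry the maximum over nodes collapses to $\max_{\cCC} \lambda_c/\nu_c$, and summing the two contributions yields exactly $\varrho$. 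Positive recurrence (respectively transience) of $\vecQ^{(N)}$ for every $N \in \mathbb{N}$ follows, as $\varrho$ does not depend on $N$.

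The main obstacle is the precise matching of hypotheses, since Down's theorem is formulated for a fairly general polling framework and one has to check that our exponential switchover, exponential service, and Poisson arrivals all fit within the class he treats, and that the ``empty-node re-switchover'' mechanism is exactly the random-polling discipline covered by \cite[Theorem 2.3]{Down98}. A secondary subtlety is translating Down's per-node bottleneck, which a priori ranges over all $N$ queues, into the per-class bottleneck $\max_{\cCC}$: this reduction is immediate because all class-$c$ nodes share the same parameters, but it should be stated explicitly to justify the symmetry collapse. The boundary case $\varrho = 1$ is outside the scope of the cited theorem and is correspondingly left open in the proposition's statement.
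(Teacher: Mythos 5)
Your proposal is correct and follows essentially the same route as the paper: Section \ref{Sec_Compl_PR} sets up the identification of the complete-interference CSMA model with a 1-limited polling system with random routing and exponential switchover of rate $\nu=\sumc\nu_c$, and Appendix \ref{App_Prop_Compl_PR} carries out exactly the notational translation you describe (computing $p_{(c,k)}=\nu_c/(N_c\nu)$, $\bar S=1$, $\delta^*=1/\nu$, and reducing Down's condition to $\max_c\lambda_c/\nu_c<1-\sum_d\lambda_d/\mu_d$). The only cosmetic difference is that you fold $\bar S$ and $\delta^*$ into the single ratio $\lambda_i/(\nu p_i)$, which evaluates to the same quantity.
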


The proof in \cite[Theorem 2.3]{Down98} consists of analyzing the
fluid limit of a polling system, and we can conclude the positive
recurrence of the CSMA model due to the above-described equivalence.
Observe that the condition $\varrho < 1$ is not dependent on~$N$,
and is equivalent to condition~\eqref{eqn_Gen_Assum}, i.e.,
\[
\varrho < 1 \quad \iff \quad \xi_c < 1, \: \text{ for every } \cCC.
\]


\revU{Because of the equivalence, the pseudo-conservation law proved
in~\cite[Equation (5.34)]{BW89} for polling systems also applies
to the CSMA model with a complete interference graph, yielding
\[
\sumc \rho_c \big(1 - \frac{\lambda_{c}}{\nu_{c}(1 - \rho)}\big)
\mathbb{E}[W_c^{(N)}] =
\frac{1}{1-\rho} \Big(\rho \sumc \frac{\rho_c}{\mu_c} +
\sumc \frac{N_c \rho_c}{\nu_c}\Big), \label{PseudoCons1}
\]
where $\rho = \sum_{c=1}^N \rho_c$ and the random variable $W_c^{(N)}$
represents the time a packet spends waiting in the buffer  
of a class-$c$ node in stationarity in a system with $N$~nodes.
The above pseudo-conservation law is valid as long as the stationary
expectations of the waiting times are finite, which is ensured
by~\cite[Theorem 5.5]{DM95} as long as the stability
condition~\eqref{eqn_Gen_Assum} is satisfied.

It can be easily deduced from 
the above pseudo-conservation law that
\begin{align}
\limsup_{N\rightarrow\infty} \sumc \frac{\nu_c}{N_c} \mathbb{E}[W_c^{(N)}] = K_0 < \infty, \label{PseudoCons1_Upper2}
\end{align}
when \eqref{eqn_Gen_Assum} is satisfied.
The above bound will play a key role in proving tightness
and justifying an interchange of limits in the following subsection.
}
 
\subsection{Tightness and interchange of limits}
\label{Sec_Compl_TightIL}

In Proposition~\ref{Prop_Compl_PR} we showed that the queue length
process $\vecQ^{(N)}(t)$ has a stationary distribution
if condition~\eqref{eqn_Gen_Assum} is satisfied.
Denote it by the probability measure $\pi_{\vecQ_*^{(N)}}$, i.e., 
\[
\lim_{t\rightarrow\infty} \mathbb{P}\{\vecQ^{(N)}(t) = \vecq\} =
\pi_{\vecQ_*^{(N)}}(\vecq), \qquad \vecq \in \mathbb{N}_0^N.
\]
\revT{For finite~$N$ the process $\vecQ^{(N)}(t)$ \revR{uniquely
determines via a continuous map $\tvecX^{(N)}(t)$, hence the
population process $\tvecX^{(N)}(t)$} also has a stationary distribution.
In particular, define the function
$\vecg^{(N)}: \NatZero^N \rightarrow E^1$, where
\[
\big(\vecg^{(N)}(\vecq)\big)_{c,m} =
\frac{1}{N_c} \sum_{n=1}^{N_c} \mathbbm{1}\{q_{c,n}=m\}.
\]
Thus, we derive $\pi_{\vecX_*^{(N)}}$, the stationary distribution
of the population process.
Given $\vecx \in E^1$, we have that
\begin{equation}
\pi_{\vecX_*^{(N)}}\big(\vecx\big) =
\sum_{\vecq: \vecg^{(N)}(\vecq) = \vecx} \pi_{\vecQ_*^{(N)}}\big(\vecq\big).
\label{T04_eqn_RelQX}
\end{equation}
Denote by $\vecX_*^{(N)}$ the random variable distributed according
to~$\pi_{\vecX_*^{(N)}}$.
For every fixed $N \geq 1$, it holds that
$\vecX^{(N)}(t) \Rightarrow \vecX_*^{(N)}$ as $t \rightarrow \infty$.}

\revT{At this point we are close to proving
equation~\eqref{eqn_Com_Objective}, for which we need to show that 
$\vecX_*^{(N)} \Rightarrow \vecx^*$ as $N \rightarrow \infty$.
Indeed, in Subsection~\ref{Sec_Compl_GS} we characterized
$\vecx^* \in E^1$, and thanks to Proposition~\ref{Prop_Compl_PR},
we proved the existence of the sequence of random variables
$\{\vecX^{(N)}\}_{N \geq 1}$.
Two further steps are needed in order to show~\eqref{eqn_Com_Objective}. 
First, we need to actually show that the sequence of probability
measures $\{\pi_{\vecX_*^{(N)}}\}_{N \geq 1} \subseteq \mathbb{P}(E^1)$
has at least one limiting point. 
Then we need to show that each possible limiting point necessarily
coincides with $\delta_{\vecx^*} \in \mathbb{P}(E^1)$, the Dirac
measure concentrated in~$\vecx^*$. 
Once we settle these issues, we can finally conclude that
equation~\eqref{eqn_Com_Objective} holds, and thus the proof
of Theorem~\ref{Theo_Comp_IL} is complete.

Key to the first step described above is the following proposition
whose proof is presented in Appendix~\ref{App_Prop_Comp_Tightness}.}

\begin{proposition}
\label{Prop_Comp_Tightness}
The sequence
\revU{$\{\pi_{\vecX_*^{(N)}}\}_{N \geq 1} \subseteq \mathbb{P}(E^1)$}
is tight.
\end{proposition}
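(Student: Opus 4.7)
The plan is to exploit the product structure of $E^1$ under the metric $\rho$. Since every $\vecx^{(N)}$ consists of $C$ probability distributions $x^{(N)}_{c,\cdot}$ on $\mathbb{N}_0$ (one per class) and the topology on $E^1$ is the product of the topologies on the class marginals, the sequence $(\vecx^{(N)})_N$ is tight in $E^1$ if and only if each sequence of class marginals $(x^{(N)}_{c,\cdot})_N$ is tight as a sequence of probability measures on $\mathbb{N}_0$. By Prokhorov's theorem combined with Markov's inequality, the latter is in turn implied by uniform first-moment bounds
\[
\sup_{N\in\mathbb{N}} m_c(\vecx^{(N)}) = \sup_N \sum_{n=0}^\infty n\, x^{(N)}_{c,n} < \infty \qquad \forall\, \cCC.
\]
So the whole problem is reduced to establishing uniform-in-$N$ control on the mass of each class marginal.

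By the class exchangeability of the stationary distribution, $m_c(\vecx^{(N)})$ coincides with the stationary mean queue length $\mathbb{E}[Q^{(N)}_{c,1}]$ at any single class-$c$ node in the prelimit system. To bound this quantity, I would revisit the constructive Foster--Lyapunov argument of Appendix \ref{App_Prop_Compl_PR_Alt}, sharpening the existing quadratic Lyapunov function $V$ so as to yield a quantitative drift inequality of the form
\[
\mathcal{L}^{(N)} V(\vecq) \;\leq\; -\varepsilon \sum_{\cCC}\sum_{k=1}^{N_c} q_{c,k} + K,
\]
where $\mathcal{L}^{(N)}$ is the generator of $\vecQ^{(N)}$, and $\varepsilon>0$, $K<\infty$ depend only on $(\lambda_c,\mu_c,\nu_c,p_c)_{\cCC}$ (with $\varepsilon$ arising from the strict slack in condition \eqref{eqn_Gen_Assum}). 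Integrating against the stationary measure $\pi^{(N)}$ and using class symmetry then yields $\sum_{\cCC} N_c\, m_c(\vecx^{(N)}) \leq K/\varepsilon$ uniformly in $N$, after which the $\sum_n x^{(N)}_{c,n}=1$ constraint delivers the required uniform bound on each $m_c(\vecx^{(N)})$.

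An alternative route that bypasses reworking the Lyapunov function goes through the polling-system equivalence established just before Proposition \ref{Prop_Compl_PR}: the complete-interference CSMA model is a single-server, $1$-limited polling system with $N$ stations grouped by class, for which classical mean-value and stability analysis under condition \eqref{eqn_Gen_Assum} gives the desired uniform moment bound directly in terms of the load parameters and the margin $1-\varrho$.

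The principal obstacle is ensuring that the constants $\varepsilon$ and $K$ in the drift estimate are genuinely independent of $N$. Since the per-node rates $\lambda_c^{(N)}$ and $\nu_c^{(N)}$ both scale like $1/N_c$, the local Lyapunov drift at any single node is $O(1/N)$, and a naive site-by-site analysis collapses. The negative drift has to be extracted at the class-aggregate level, so that the bound reflects the macroscopic balance $\lambda_c$ versus $\nu_c(1-x_{c,0})\pi^b_{\vecx_0}$ appearing in \eqref{MassDerivative} and matches the macroscopic stability already exhibited by the mean-field flow through Proposition \ref{Prop_Compl_GlobStab}. Once this $N$-uniformity is secured, the tightness claim follows immediately from the product-topology reduction above.
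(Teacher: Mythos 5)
Your reduction of tightness to a uniform-in-$N$ bound on the class masses $m_c(\vecx^{(N)})$, via the product structure of $E^1$ and Markov's inequality, is exactly the paper's first step, and your ``alternative route'' is in fact the proof the paper gives: it bounds $m_c(\vecx^{(N)}) \leq \frac{\nu_c}{N_c}\mathbb{E}[\hat W^{(N)}_c]$ and then controls $\frac{1}{N_c}\mathbb{E}[\hat W^{(N)}_c]$ uniformly in $N$ by specializing the Boxma--Weststrate pseudo-conservation law \eqref{PseudoCons1} to the equivalent $1$-limited polling system, using that under condition \eqref{eqn_Gen_Assum} every weight $\frac{\lambda_c}{\mu_c}\bigl(1-\frac{\lambda_c}{\nu_c}\frac{1}{1-\rho}\bigr)$ is strictly positive, so that each class's term in the weighted sum is individually bounded by the right-hand side, with \cite{DM95} supplying finiteness of the stationary waiting times.

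Your primary route, however, contains a genuine error as displayed. A drift inequality $\mathcal{L}^{(N)}V(\vecq) \leq -\varepsilon\sum_{c,k}q_{c,k}+K$ with $\varepsilon,K$ independent of $N$ would, upon integration against $\pi^{(N)}$, give $\sum_{\cCC} N_c\, m_c(\vecx^{(N)}) \leq K/\varepsilon$ and hence $m_c(\vecx^{(N)}) \leq K/(\varepsilon N_c) \rightarrow 0$; this contradicts Theorem \ref{Theo_QWSNcExpConvergence}, by which $m_c(\vecx^{(N)}) = \mathbb{E}[Q_c^{(N)}] \rightarrow \xi_c/(1-\xi_c) > 0$. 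So no such inequality can hold, and the conclusion $\sum_{\cCC} N_c\, m_c(\vecx^{(N)}) \leq K/\varepsilon$ is false (the left-hand side grows linearly in $N$). Since, as you yourself observe, both the per-node arrival rate $\lambda_c/N_c$ and the per-node effective service rate are $O(1/N)$, the drift one can actually extract is $\mathcal{L}^{(N)}V(\vecq) \leq -\frac{\varepsilon}{N}\sum_{c,k}q_{c,k}+K$, which yields $\sum_{\cCC} N_c\, m_c(\vecx^{(N)}) \leq KN/\varepsilon$ and hence $m_c(\vecx^{(N)}) \leq K/(\varepsilon p_{c,N})$ --- still the uniform bound you need, but only after correcting the scaling. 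Two further points would need care on this route: $\vecQ^{(N)}$ alone is not Markov (the activity state must be carried along, which is why Appendix \ref{App_Prop_Compl_PR_Alt} works with an embedded chain at selected back-off epochs rather than with a generator of $\vecQ^{(N)}$), and integrating the drift inequality against the stationary law requires an a priori integrability or truncation argument. None of this is unfixable, but as written the key displayed inequality of your first route cannot be established.
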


At this point, via Prohorov's theorem
\cite[Theorem 6.2, page 37]{Billingsley68}, we deduce that the
sequence $(\pi_{\vecX_*^{(N)}})_{N \geq 1} \subseteq \mathbb{P}(E^1)$
possesses converging subsequences. 
\revU{This convergence can be established through a line of argument
originally developed in~\cite{Whitt85} (where the state space is finite
and thus tightness automatic) and later extended in~\cite{GZ06}.}
\revT{Specifically, given any of the converging subsequences
$\{\pi_{\vecX_*^{(N_k)}}\}_{k \geq 1}$, denote by $\pi_{\vecX_*}$ its
limit.
The elements of the sequence $\{\pi_{\vecX_*^{(N_k)}}\}_{k \geq 1}$ may
be interpreted as a sequence of distributions from which the initial
condition of the initial-value problem~\eqref{eqn_MFDiffEqnGen_com}
is sampled.
Observe that, as $N_k \rightarrow \infty$, the network behavior is
governed by the solution of the mean-field limit and thus, given the
randomized initial condition, the population process evolves
deterministically as $\vecx(t)$.
So as to be the limit of a subsequence of stationary distributions,
$\pi_{\vecX_*}$ needs to be an invariant distribution for the
initial-value problem \eqref{eqn_MFDiffEqnGen_com}. 
\revB{Observe that $\delta_{\vecx^*}$ is an invariant distribution,
and moreover, given the results of Subsection~\ref{Sec_Compl_GS}, 
here are no others.}
In fact, because of global stability, any solution $\vecx(t)$
with finite initial mass converges to~$\vecx^*$ as $t\rightarrow\infty$,
and due to Proposition~\ref{Prop_Comp_Tightness}, the initial condition
sampled from $\pi_{\vecX_*}$ has finite mass with probability~$1$.
Hence, we conclude that $\vecX_*^{(N)} \Rightarrow \vecx^*$
as $N \rightarrow \infty$, completing the proof
of Theorem~\ref{Theo_Comp_IL}.

\subsection{Performance measures}
\label{Sec_Tag}

\revU{We now leverage Theorem~\ref{Theo_Comp_IL} for the population
process to obtain various performance measures of interest.
In particular, we sketch the proofs of
Theorems~\ref{Theo_QWSNcWeakConvergence}
and~\ref{Theo_QWSNcExpConvergence}, yielding approximations for the
stationary distribution and expectation of the queue length of a node
as well as the sojourn time and waiting time of a packet in the system.

\label{Sec_Tag_Distr}

Consider a tagged class-$c$ node.
In order to obtain an approximation for the stationary performance
measures, we rely on the symmetry of the class-$c$ nodes and leverage
the results established in the previous subsection.}
In particular, for $N$ sufficiently large, we aim to use the 
following approximation
\begin{equation}
\label{eqn_ConvDistrQRelation1}
\lim_{t \rightarrow \infty} \mathbb{P} \{Q_{c,1}^{(N)}(t) = k\} \approx
\lim_{N \rightarrow \infty}\lim_{t \rightarrow \infty} \mathbb{P} \{Q_{c,1}^{(N)}(t) = k\}, 
\end{equation}
\revU{where the right hand side is tractable
due to Theorem~\ref{Theo_Comp_IL}.
Specifically, due to the symmetry of nodes within the same class,
we have that}
\[
\lim_{N \rightarrow \infty} \lim_{t \rightarrow \infty}
\mathbb{P}\{Q_{c,1}^{(N)}(t) = k\} = x_{c,k}^* = (1 - \xi_c)\xi_c^k,
\]
 
and therefore
\[
Q_c^{(N)} \Rightarrow \bar{Q}_c, \qquad
\bar{Q}_c \sim \text{Geo}(\xi_c), \qquad c \in \mathcal{C}.
\]

Proceeding to the waiting-time distribution, denote by $\phi_{X}(\cdot)$
and $F_{X}(\cdot)$ the Laplace-Stieltjes transform and the probability
generating function of a random variable~$X$, and define the random
variable $\bar W^{(N)}_{c} = \frac{\lambda_c}{N_c} W_{c}^{(N)}$.
\revU{It follows from \ref{PseudoCons1_Upper2} that the sequence $\bar W^{(N)}_{c} $
is tight as a consequence of Markov's inequality. We now determine the limit for an
arbitrary sequence. }
Observe that the Laplace-Stieltjes transform $\phi_{W^{(N)}_c}$
and the probability generating function $F_{Q_{c}^{(N)}}(z)$ 
satisfy the distributional form of Little's law, i.e.
\begin{equation}
\phi_{\bar W^{(N)}_c}(z) = \phi_{\frac{\lambda_c}{N_c} W^{(N)}_c}(z) =
F_{Q_c^{(N)}}(1-z), \qquad z \in [0,1],
\label{eqn_DistLittle}
\end{equation}
see~\cite{BN95}.

The convergence of $F_{Q_c^{(N)}}(1-z)$
to $\frac{1-\xi_c}{1 - \xi_c(1-z)}$ can then be leveraged to prove
\[
\frac{\lambda_c}{N_c} W_{c}^{(N)} \Rightarrow \bar{W}_c, \qquad
\bar{W}_c  \sim \text{Exp}\big(\frac{1-\xi_c}{\xi_c}\big), \qquad
c \in \mathcal{C}.
\]

Finally turning to the sojourn time distribution, we define the random
variable $\bar S_c^{(N)}= \frac{\lambda_c}{N_c} S_c^{(N)}$,
and observe that
\[
\bar S_c^{(N)} \sim \bar W_{c}^{(N)} + \frac{\lambda_c}{N_c} U_c^{(N)},
\qquad c \in \mathcal{C},
\]
where $U_c^{(N)}\sim \text{Exp}(\mu_c^{(N)})\sim \text{Exp}(\mu_c)$
denotes the transmission time of a class-$c$ packet.
Therefore
\[
\frac{\lambda_c}{N_c} S_c^{(N)} \Rightarrow \bar{S}_c, \qquad
\bar{S}_c \sim \text{Exp}\big(\frac{1-\xi_c}{\xi_c}\big), \qquad
c \in \mathcal{C}.
\]

\revU{Above we sketched the proof
of Theorem~\ref{Theo_QWSNcWeakConvergence}, furnishing the limiting
distributions of the stationary queue length, waiting-time,
and sojourn time distributions of the various classes.
Such convergence results do not allow any immediate conclusions
for the corresponding expectations as stated
in Theorem~\ref{Theo_QWSNcExpConvergence},
but in this case these can be established by once again exploiting
the equivalence with a 1-limited polling system with random routing
described in Section~\ref{Sec_Compl_PR} and specifically the
pseudo-conservation law \eqref{PseudoCons1}}.

\section{Numerical experiments}
\label{Sec_Num}

In this section we perform numerical experiments in order to
complement the analytical results obtained in the previous sections.
In particular, we \revU{discuss an example with a non-complete
interference graph and no symmetries.
We visually display the result proved in Theorem~\ref{Theo_MFGen},
and consider both a scenario in which a fixed point exists and one
in which it does not.
For additional details on the quality of the approximations derived in Section \ref{Sec_Compl}, we refer to the analysis in \cite[Section 4.6]{Cecchi18} where it is shown that these prove to be remarkably accurate even in moderately large networks.}

\begin{figure}
\begin{center}
\includegraphics[width=0.5\textwidth]{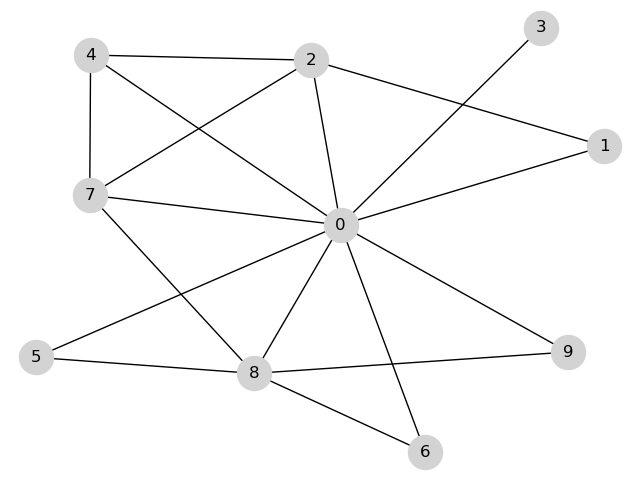}
\caption{10-class interference graph.
\label{fig_IntGraph10Nodes}}
\end{center}
\end{figure}

\revU{
Throughout this section we focus on the 10-class network displayed
in Figure~\ref{fig_IntGraph10Nodes} with the following parameters:
\begin{align*}
\lambda_c=0.4,\qquad &\text{for } c = 1,\ldots,9\\
\nu_c=\mu_c=3, \qquad &\text{for } c = 0,\ldots,9
\end{align*}
and an equal number of nodes in each class, i.e., $p_c = 0.1$
for $c=0,\ldots,9$.
We let $\lambda_0$ vary and consider two scenarios.

In Scenario~1, we set $\lambda_0 = 0.25$, and we verify that
$\vecxi = \veceta(\vecrho) < \vece$ to ensure that a unique
\revT{fixed} point $\vecx^*$ of~\eqref{eqn_MFDiffEqnGen} exists,
by virtue of Theorem~\ref{Theo_EquilibChar}.
In order to solve~\eqref{eqn_FormulaXi}, we use the numerical
algorithm described in~\cite{CBLW16}, which is an adaptation of the
algorithm presented in~\cite{VJLB11}, and obtain
\[
\vecxi \approx (0.478,\: 0.205,\: 0.311,\: 0.170,\: 0.258,\: 0.205,\: 0.205,\: 0.311,\: 0.359,\: 0.205).
\]
Note that $\vecxi < \vece$, and Theorem~\ref{Theo_EquilibChar} thus
yields that
\[
\vecx^* = (x^*_{c,n}), \qquad x^*_{c,n}=(1-\xi_c)\xi_c^n,
\]
is the unique \revT{fixed} point. 

In Scenario~2, we double the arrival rate at node~0, i.e.,
$\lambda_0 = 0.5$, and observe that \eqref{eqn_MFDiffEqnGen} has no
fixed points.
In fact, by means of the numerical algorithm described in~\cite{CBLW16}
we obtain that
\[
\vecxi \approx (1.317,\: 0.235,\: 0.380,\: 0.190,\: 0.308,\: 0.235,\: 0.235,\: 0.380,\: 0.443,\: 0.235).
\]
In this case $\xi_0 > 1$, and in view of Theorem~\ref{Theo_EquilibChar},
no \revT{fixed} point for~\eqref{eqn_MFDiffEqnGen} exists. \\

\noindent
{\it Convergence of the population process to the solution
of~\eqref{eqn_MFDiffEqnGen}.}
We first visually illustrate the result stated in Theorem~\ref{Theo_MFGen},
i.e, the convergence of the properly scaled population process
$\vecX^{(N)}(t)$ to the solution $\vecx(t)$ of~\eqref{eqn_MFDiffEqnGen}
for arbitrary interference graphs. 
In Figure~\ref{fig_MFGenConverge_10_conv} we display sample paths
of the scaled population process and compare these with the solution
$\vecx(t)$ of~\eqref{eqn_MFDiffEqnGen}.
We show the results for both Scenarios~1 and~2, and confirm that
Theorem~\ref{Theo_MFGen} holds independently of the existence of the
fixed point.
For both cases we let the number of nodes per class be $100$ or $1000$.
Note that the oscillations around $\vecx(t)$ decrease in magnitude
as $N$ increases. \\

\begin{figure}
\begin{center}
\includegraphics[width=0.44\textwidth]{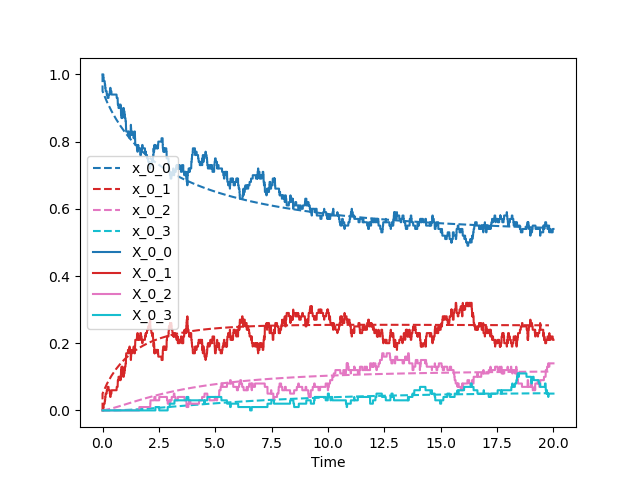}
\includegraphics[width=0.44\textwidth]{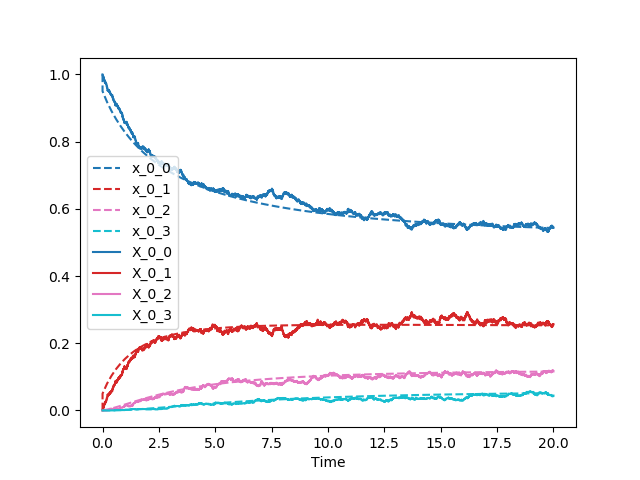}
\includegraphics[width=0.44\textwidth]{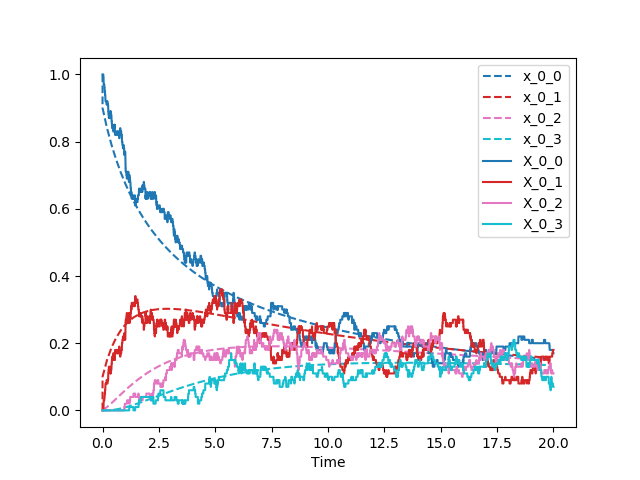}
\includegraphics[width=0.44\textwidth]{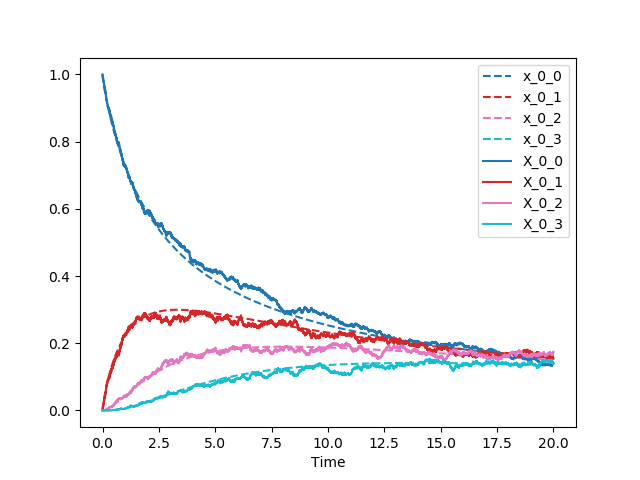}
\caption{Plot of the sample path $X^{(N)}(Nt)$ and of the solution
$x_{c,n}(t)$ of~\eqref{eqn_MFDiffEqnGen} for $c=0$ and $n=0,1,2,3$.
Both for Scenario~1 (top) and Scenario~2 (bottom) with $N_c = 100$
(left) and $N_c=1000$ (right).
\label{fig_MFGenConverge_10_conv}}
\end{center}
\end{figure}

\noindent
{\it Scenario~1. Convergence to the fixed point.}
For Scenario~1 we derived a unique fixed point~$\vecx^*$
of~\eqref{eqn_MFDiffEqnGen}.
In Figure~\ref{fig_MFGenConverge_10} we display a numerical solution
of~\eqref{eqn_MFDiffEqnGen} with empty initial condition,
and observe that $\vecx(t) \rightarrow \vecx^*$ as $t \to \infty$.
This supports the hypothesis that the solution $\vecx(t)$ converges
to~$\vecx^*$ even when the interference graph is not complete. \\

\begin{figure}
\begin{center}
\includegraphics[width=0.44\textwidth]{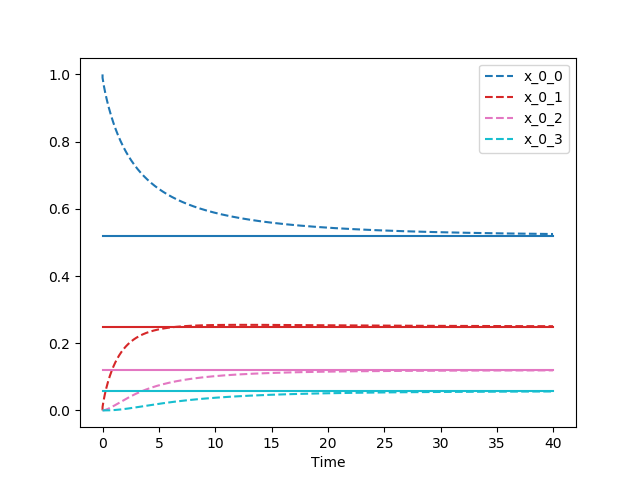}
\includegraphics[width=0.44\textwidth]{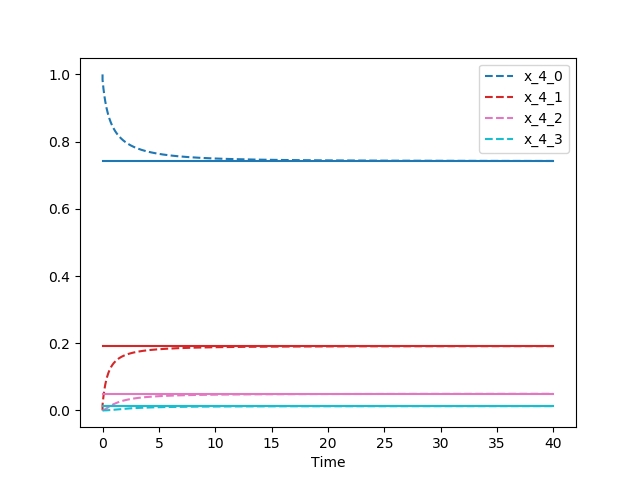}
\caption{Scenario 1. Plot of the solution $x_{c,n}(t)$
of~\eqref{eqn_MFDiffEqnGen} for $c=0,4$ and $n=0,1,2,3$.
\label{fig_MFGenConverge_10}}
\end{center}
\end{figure}

\noindent
{\it Scenario~2. Explosion of the population process.} 
For Scenario~2 we established that there exists no fixed point
of~\eqref{eqn_MFDiffEqnGen}.
In this case, we observe a dichotomy in behavior.
Some classes explode, i.e., the number of backlogged packets in the
buffer of every node grows without bound.
Other classes remain stable, but the stationary buffer content
distribution is no longer geometric with parameters given by the
activity factors $\vecxi$.   
In Figure~\ref{fig_MFGenConverge_10_over} we display a numerical
solution of~\eqref{eqn_MFDiffEqnGen} with empty initial condition.
Observe that for class~$0$, the solution is non-monotone and converges
to~$0$.
In contrast, for class~$4$, the solution converges to a fixed point
which is however no longer characterized by~$\vecxi$ (even if very close).

\begin{figure}
\begin{center}
\includegraphics[width=0.44\textwidth]{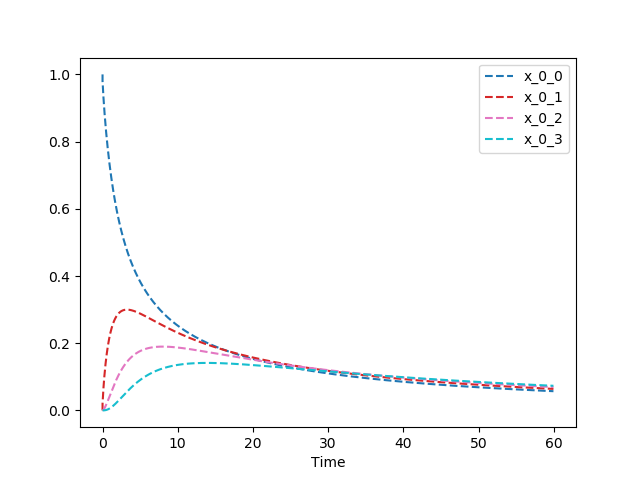}
\includegraphics[width=0.44\textwidth]{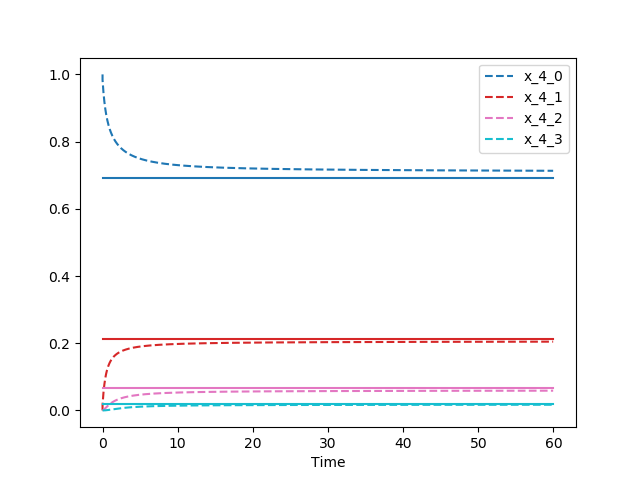}
\caption{Scenario 2. Plot of the solution $x_{c,n}(t)$
of \eqref{eqn_MFDiffEqnGen} for $c=0,4$ and $n=0,1,2,3$.
\label{fig_MFGenConverge_10_over}}
\end{center}
\end{figure}
}

\section{Conclusion}
\label{Sec_Conclusion}

We analyzed distributed random-access networks in a many-sources regime
with buffer dynamics.
The difficulty in the analysis is the complex interaction between the
medium activity state and the buffer content process.
However, in the mean-field regime, these processes are shown to decouple
and a tractable initial-value problem describing the buffer content
process is obtained. 
The methodological framework that we developed is fairly generic,
and expected to be applicable to a broader range of problems
where a similar decoupling occurs. 

For the case of a complete interference graph the stationary buffer
content and the packet waiting and sojourn time are shown to converge
in distribution and expectation to random variables whose parameters
are explicitly given by the unique \revT{fixed} point of the 
initial-value problem.
These results are obtained via an interchange of limits approach which
ensures that the resulting approximations are asymptotically exact.
Numerical experiments suggest that these results extend to general
interference graphs, but a rigorous proof remains as a challenging
problem for further research.

\revR{The theoretical machinery for the derivation of mean-field limits
developed in Section~\ref{Sec_Gen} is highly flexible and may be
applied to a wide range of models. 
The basic model described in Section~\ref{Sec_Model} can be extended,
and the associated mean-field limits may be obtained along similar lines.
For instance, with minimal additional effort, mean-field limits can be
obtained for models with queue-based back-off rates and finite-capacity
buffers.
The time-scale separation between the activity process and the
population process plays a key role once again.
However, the expression for the stationary distribution of the activity
process with a fixed population is model-dependent, and this is
reflected in the corresponding limiting initial-value problems
which differ only in the expression for $\pi_{\vecx}(\Omega_{-c})$,
see~\cite{FabioThesis18} for further details.}

\appendix

\section{Extended proofs} 
\label{App_Proofs}

\subsection{Proof of Lemma \ref{prop_dZ}}
\label{App_prop_dZ}

\revR{We already observed that} the process $\vecXI^{(N)}$ is a multi-parameter martingale
adapted to a filtration ${\bf F}$. Since $C$ is finite and $\frac{\abs{ \Xi^{(N)}_{c,n}(t)}}{1 + \abs{\Xi^{(N)}_{c,n}(t)}} \leq 1$ for every
$t \geq 0$, 
there exists \revB{$K_\eta$ such that
$$
\sumc \sum_{n > K_\eta}  2^{-n}\frac{\abs{ \Xi^{(N)}_{c,n}(t)}}{1 + \abs{\Xi^{(N)}_{c,n}(t)}} < \eta/2.
$$}
For each $c \in \CC, n \leq K_\eta$, $\Xi^{(N)}_{c,n}(t)$ is a ${\cal L}^2$ ${\bf F}$ martingale so
that $\lb \Xi^{(N)}(t) \rb^2$ is a ${\cal L}^1$ ${\bf F}$ submartingale. Applying Doob's submartingale inequality with $p=1$ 
(see \cite{RY13}),
we obtain
\begin{equation*}
\pr{ \sup_{t \in [0,T]} \abs{ \Xi^{(N)}_{c,n}(t)}^2  > \varepsilon} \leq \frac{\expect{ \abs{ \Xi^{(N)}_{c,n}(T)}^2 }}{\epsilon},\qquad c \in \CC, n \leq K_\eta.
\end{equation*}
However $\expect{ \abs{ \Xi^{(N)}_{c,n}(T)}^2 } = O(1/N)$ from which the result follows. 

\revR{
\subsection{Lipschitz continuity of the function $H(\cdot)$}
\label{App_LipContH}

For simplicity, denote by $\vecx^{(c)} = (x_{c,n})_{n \in \NatZero}$ and we know that $\vecx^{(c)} \in \chi$
which is closed and compact under the poduct topology. We will leverage the following lemma, whose proof 
is omitted.
\begin{lemma}
\label{Lemma_LipContH}
 Given $\vecf,\vecg: E  \rightarrow \prod_{\cCC} \mathbb{R}^{\infty} $
 Lipschitz continuous functions, then so is $\vech$, where $\vech(\vecx) = \vecf(\vecx) + \vecg(\vecx)$. Given
 $f,g: E  \rightarrow \mathbb{R}$ Lipschitz continuous and bounded functions, then 
 so is $h$, where $h(\vecx):=f(\vecx)g(\vecx)$.
\end{lemma}
Let us recall that
\[
 \rho\big( (\vecx^{(1)},\ldots,\vecx^{(C)}) , (\vecy^{(1)},\ldots,\vecy^{(C)})\big) = \sum_{\cCC} \rho_1\big(\vecx^{(c)} , \vecy^{(c)}\big).
\]

Note that the first part of function $H(\vecx)$, i.e., 
\[
 \sum_{\cCC} \frac{1}{p_c}  \lambda_c \sum_{n \in \NatZero} x_{c,n} \vece_n^{n+1},
\]
is clearly Lipschitz. Let us now focus on the second part, for simplicity we neglect the constants $1/p_c$ and $\nu_c$.
Consider the term
\[
 \pi_{\vecx_0}(\Omega_{-c})x_{c,n}
\]
for some $\cCC$. Clearly $x_{c,n}$ is Lipschitz continuous and bounded for every $\vecx^{(c)} \in \chi$, as for the term $\pi_{\vecx_0}(\Omega_{-c})$
it is clearly bounded and we now show that it is Lipschitz continuous in $\vecx$. Note that, we can write $\pi_{\vecx_0}(\Omega_{-c})$ as
\[
 \pi_{\vecx_0}(\Omega_{-c}) = \frac{a^{(c)}(x_0^{(1)},\ldots,x_0^{(C)})}{b^{(c)}(x_0^{(1)},\ldots,x_0^{(C)})},
\]
where both $a^{(c)}$ and $b^{(c)}$ are polynomials in the variables $\{x^{(c)}_0\}_{\cCC}$ such that
\[
 a^{(c)}(x_0^{(1)},\ldots,x_0^{(C)}) \geq 0, \qquad b^{(c)}(x_0^{(1)},\ldots,x_0^{(C)}) \geq 1,
\]
for every $(\vecx^{(1)},\ldots,\vecx^{(C)}) \in E$. Hence, it holds that
\[
 \frac{\partial a^{(c)}}{\partial x_0^{(d)}} = \frac{\partial \pi_{\vecx_0}(\Omega_{-c})}{\partial x_0^{(d)}}b^{(c)} + \pi_{\vecx_0}(\Omega_{-c})\frac{\partial b^{(c)}}{\partial x_0^{(d)}}. 
\]
Observe that $\frac{\partial a^{(c)}}{\partial x_0^{(d)}}$ and $\frac{\partial b^{(c)}}{\partial x_0^{(d)}}$ are both bounded and continuous, and therefore
$\frac{\partial \pi_{\vecx_0}(\Omega_{-c})}{\partial x_0^{(d)}}$ is bounded as well for every $c,d \in \CC$. Since $C$ is finite, there exists $L_{\pi} < \infty$
such that
\[
 \big|\frac{\partial \pi_{\vecx_0}(\Omega_{-c})}{\partial x_0^{(d)}} \big| \leq L_{\pi}, \qquad \forall c,d \in \CC.
\]
Hence, we have that 
\begin{align*}
 \big|\pi_{\vecx_0}(\Omega_{-c}) - \pi_{\vecy_0}(\Omega_{-c}) \big| &\leq L_{\pi} \sum_{d \in \CC}|x_0^{(d)} - y_0^{(d)}| \\
 &\leq L_{\pi} \rho\big( (\vecx^{(1)},\ldots,\vecx^{(C)}) , (\vecy^{(1)},\ldots,\vecy^{(C)})\big),
\end{align*}
yielding that $\pi_{\vecx_0}(\Omega_{-c})$ is Lipschitz continuous in $\vecx \in  E$.
The above results, together with Lemma \ref{Lemma_LipContH}, suffice to prove the Lipschitz continuity of function $H(\vecx)$.
}

\subsection{Proof of Corollary~\ref{alphacorollary}}
\label{proofalphacorollary}

Thanks to the fundamental theorem of calculus applied
to~\eqref{FLCumulativeTransActivProc}, it holds that
\begin{align}
0 = \nu_c (1- x_{c,0}(t)) \sum_{\omega \in \Omega_{-c}} \dtalp_{\omega}(t) -
\mu_c \sum_{\omega \in \Omega_{+c}} \dtalp_\omega(t), \qquad \text{a.e.}
\label{YmartingaleLimit}
\end{align}
Hence, at every time~$t$, we have to look for solutions
$(z^t_{\omega})_{\omega \in \Omega}$ of
\begin{equation}
\nu_c (1- x_{c,0}(t)) \sum_{\omega \in \Omega_{-c}} z_\omega^t =
\mu_c \sum_{\omega \in \Omega_{+c}} z_\omega^t, \quad \forall c \in \mathcal{C}, 
\label{eqn_CSMASaturStat}
\end{equation}
where in view of~\eqref{RelationCumOccAct}
$\sum_{\omega \in \Omega} z_\omega^t = 1$ and $z_\omega^t \geq 0$.
Observe that, $z_\omega^t$ corresponds to the stationary fraction
of time spent by the activity process in state $\omega \in \Omega$
in the saturated network where node~$c$ transmits at rate $\mu_c$
and back-offs at rate $\nu_c(1 - x_{c,0}(t))$, i.e.,
\[
\dot\alpha_\omega(t) = z_\omega^t = \pi(\omega; \vece - \vecx_{0}(t)) =
\pi_{\vecx_0(t)}(\omega).
\]
In particular, existence and uniqueness of the stationary distribution
$\pi_{\vecx_0(t)}(\omega)$ is ensured whenever $\vecx_0(t) \leq \vece$.
Hence, $\dtalp_{\omega}(t) = z^t_{\omega} $ is well-defined and, since an
increasing unit Lipschitz function starting from~$0$ is determined by its
derivatives, the equality holds for all~$t$, determining $\alpha_\omega$. 
Observe that $\alpha_\omega$ exists everywhere since it coincides
with the integral of a continuous function.

\subsection{Proof of Proposition~\ref{Prop_Comp_Tightness}}
\label{App_Prop_Comp_Tightness}

\revT{Given the sequence of stationary distributions of the population
process $\{\pi_{\vecX_*^{(N)}}\}_{N \geq 1}$, we will show that for every
$\epsilon > 0$ there exist a compact set $\mathcal{K}_\epsilon \subset E^1$
and $N_\epsilon > 0$ such that
\begin{equation}
\mathbb{P}\big\{\vecX^{(N)} \in \mathcal{K}_\epsilon\big\} > 1 - \epsilon,
\qquad \forall \: N>N_\epsilon.
\label{T04_eqn_TightnessCrit}
\end{equation}
In particular, we will show that the above relation holds for a compact
set $\mathcal{K}_{\epsilon}$ defined as
\[
\mathcal{K}_\epsilon =
\big\{\vecx \in E^1: \sum_{\cCC} m_c(\vecx) \leq K_\epsilon \big\}.
\]
 
Let us now show that $\mathcal{K}_{\epsilon}$ is compact.
Note that $\mathcal{K}_\epsilon \subseteq E$, which is compact.
Hence, it is sufficient to prove that $\mathcal{K}_\epsilon$ is closed.
Observe that any sequence $\vecx_n$ in $\mathcal{K}_\epsilon$ is also
a sequence in~$E$.
Let $\vecx$ be a limit point of this sequence which is necessarily in~$E$.
But clearly
\[
\sum_{\cCC} m_c({\vecx}_n) \leq K_\epsilon, \qquad \forall n
\]
and $\vecx_n$ is a vector of probabilities on $\NatZero$
with $C$~components.
It follows that each component sequence is tight and therefore the
limit must be in~$E^1$.
Now let $\mu_c$ be the expectation of the limit (possibly infinite).
Then, from \cite[Theorem 5.3, p.~32]{Billingsley68}, we know that
\[
\mu_c \leq \liminf m_c({\bf x}_n) \leq K_\epsilon.
\]
Since $\sum_c \mu_c \leq \sum_c \liminf m_c({\vecx}_n) \leq
\liminf \sum_c m_c({\vecx}_n) \leq K_\epsilon$
the required bound on the sum expectations holds and so the limit is
in $\mathcal{K}_\epsilon$, which is therefore closed.
 
Note that
\[
\mathbb{P}\big\{\vecX_*^{(N)} \notin \mathcal{K}_\epsilon\big\} =
\mathbb{P}\big\{\sum_{\cCC} m_c(\vecX_*^{(N)}) > K_\epsilon\big\} \leq
\frac{\sum_{\cCC } \mathbb{E}\Big[m_c(\vecX_*^{(N)})\Big]}{K_\epsilon},
\]
due to Markov's inequality. 

Consider class $\cCC$.
Let $Q_{c,k}^{(N)}$ be the random variable denoting the number of packets
waiting in the buffer of the $k$-th class-$c$ node in stationarity.
Note that $Q_{c,k}^{(N)} = Q_c^{(N)}$ for every $k \in \{1,\ldots,N_c\}$,
due to the exchangeability of the nodes within the same class,
and observe that 
\begin{align*}
\mathbb{E}[Q_c^{(N)}] &= \sum_{\vecq \in \NatZero^N}
\pi_{\vecQ^{(N)}}(\vecq) \frac{1}{N_c} \sum_{k=1}^{N_c} q_{c,k} \\ 
&= \sum_{\vecq \in \NatZero^N} \pi_{\vecQ^{(N)}}(\vecq) \frac{1}{N_c} \sum_{k=1}^{N_c} \sum_{n=1}^{\infty} n \mathbbm{1}\{q_{c,k} = n\} \\
&= \sum_{\vecx \in E^1} \pi_{\vecX_*^{(N)}}(\vecx)
\sum_{n=1}^{\infty} n x_{c,n} = \mathbb{E}\big[m_c(\vecX_*^{(N)})\big], 
\end{align*}
where the second equality is due to~\eqref{T04_eqn_RelQX}.
Hence,
\begin{equation}
\mathbb{P}\big\{\vecX_*^{(N)} \notin \mathcal{K}_\epsilon\big\} \leq
\frac{\sumc\mathbb{E}[Q_c^{(N)}]}{K_{\epsilon}}.
\label{ProofTightness1}
\end{equation}

For every class $\cCC$, by neglecting the time periods in which
class-$c$ nodes cannot activiate due to activity of their neighbors,
we obtain that
\[
\big(\frac{1}{\mu_c} + \frac{N_c}{\nu_c}\big) \mathbb{E}[Q^{(N)}_c] \leq
\mathbb{E}[W^{(N)}_c], 
\]
and therefore
\begin{equation}
\sumc \mathbb{E}[Q^{(N)}_c] \leq
\sumc \big(\frac{1}{\mu_c}+\frac{N_c}{\nu_c} \big)^{-1} \mathbb{E}[W^{(N)}_c] \leq
\sumc \frac{\nu_c}{N_c} \mathbb{E}[W^{(N)}_c].
\label{ProofTightness2}
\end{equation}
Thus, it suffices to show that $\mathbb{E}[ W^{(N)}_c]$ grows at most
linearly in~$N$ as $N \rightarrow \infty$.

Combining~\eqref{ProofTightness1} and~\eqref{ProofTightness2}, we obtain 
\[
\mathbb{P}\big\{\vecX_*^{(N)} \notin \mathcal{K}_\epsilon\big\} \leq
\frac{1}{K_\epsilon} \sumc \frac{\nu_c}{N_c} \mathbb{E}[W_c^{(N)}].
\]
In view of~\eqref{PseudoCons1_Upper2}, for any $\epsilon > 0$ we can
therefore pick $K_\epsilon$ such that \eqref{T04_eqn_TightnessCrit}
is asymptotically satisfied.
}

\end{document}